\def\ps@pprintTitle{%
 \let\@oddhead\@empty
 \let\@evenhead\@empty
 \def\@oddfoot{\centerline{\thepage}}%
 \let\@evenfoot\@oddfoot}
\newsavebox{\foobox}
\newcolumntype{M}[1]{>{\centering\arraybackslash}m{#1}}
\numberwithin{equation}{section}
\theoremstyle{plain}
\newtheorem{thm}{\protect\theoremname}[section]
\theoremstyle{plain*}
\newtheorem*{thm*}{\protect\theoremname}
\theoremstyle{plain}
\newtheorem{lem}[thm]{\protect\lemmaname}
\theoremstyle{plain*}
\newtheorem*{lem*}{\protect\lemmaname}  
  \theoremstyle{plain}
    \theoremstyle{plain*}
  \newtheorem*{prop*}{\protect\propositionname}
\theoremstyle{remark}
\newtheorem{question}[thm]{Question}
\theoremstyle{remark*}
\newtheorem*{question*}{Question} 
\theoremstyle{remark}
\newtheorem{rem}[thm]{\protect\remarkname}
\theoremstyle{remark*}
\newtheorem*{rem*}{\protect\remarkname}
\theoremstyle{remark}
\theoremstyle{remark*}
\newtheorem*{example*}{\protect\examplename}
\theoremstyle{plain}
\providecommand{\corollaryname}{Corollary}
\theoremstyle{definition}
\theoremstyle{plain} 
\newcommand{\thistheoremname}{}
\newtheorem{genericthm}[thm]{\thistheoremname}
\newtheorem*{genericthm*}{\thistheoremname}
\newenvironment{namedthm*}[1]
  {\renewcommand{\thistheoremname}{#1}%
   \begin{genericthm*}}
  {\end{genericthm*}}
 \providecommand{\lemmaname}{Lemma}
  \providecommand{\propositionname}{Proposition}
  \providecommand{\remarkname}{Remark}
\providecommand{\theoremname}{Theorem}
\newcommand{\R}{\mathbb{R}}
\newcommand{\N}{\mathbb{N}}
\newcommand{\Z}{\mathbb{Z}}
\newcommand\precdot{\mathrel{\ooalign{$\prec$\cr
  \hidewidth\raise0ex\hbox{$\cdot\mkern0.5mu$}\cr}}}
\newcommand\preceqdot{\mathrel{\ooalign{$\preceq$\cr
  \hidewidth\raise0.225ex\hbox{$\cdot\mkern0.5mu$}\cr}}}
\title{Failure of Khintchine-type results along the polynomial image of  IP$_0$ sets}
\begin{document}
\begin{frontmatter}
\author[add1]{Rigoberto Zelada}
\ead{rzelada@umd.edu}

\address[add1]{Department of Mathematics. University of Maryland, College Park, MD 20742, USA}
    \begin{abstract}
In \cite{BFM}, Bergelson, Furstenberg, and McCutcheon established the following far reaching extension of Khintchine's recurrence theorem:\\
For any invertible probability preserving system $(X,\mathcal A,\mu,T)$, any   non-constant polynomial $p\in\Z[x]$ with $p(0)=0$, any $A\in\mathcal A$, and any $\epsilon>0$, the set 
$$R_\epsilon^p(A)=\{n\in\N\,|\,\mu(A\cap T^{-p(n)}A)>\mu^2(A)-\epsilon\}$$
is IP$^*$, meaning that for any increasing sequence $(n_k)_{k\in\N}$ in $\N$,
$$\text{FS}((n_k)_{k\in\N})\cap R_\epsilon^p(A)\neq \emptyset,$$
where 
$$\text{FS}((n_k)_{k\in\N})=\{\sum_{j\in F}n_j\,|\,F\subseteq \N\,\text{ is finite}\text{ and }F\neq\emptyset\}=\{n_{k_1}+\cdots+n_{k_t}\,|\,k_1<\cdots<k_t,\,t\in\N\}.$$
In view of the potential new applications to combinatorics, this result has led to the question of whether a further strengthening of Khintchine's recurrence theorem holds, namely  whether   the set $R_\epsilon^p(A)$ is IP$_0^*$ meaning that there exists a $t\in\N$ such that for any finite sequence $n_1<\cdots<n_t$ in $\N$, 
$$\{\sum_{j\in F}n_j\,|\,F\subseteq \{1,...,t\}\text{ and }F\neq \emptyset\}\cap R_\epsilon^p(A)\neq \emptyset.$$
In this paper we give a negative answer to this question by showing that  for any given polynomial $p\in\Z[x]$ with $\deg(p)>1$ and $p(0)=0$ there is an invertible probability  preserving system $(X,\mathcal A,\mu,T)$, a set  $A\in\mathcal A$,  and an $\epsilon>0$ for which the set $R_\epsilon^p(A)$ is not IP$_0^*$.
    \end{abstract}
\end{frontmatter}
\textbf{Key words}:  Gaussian systems, sets of recurrence, Ramsey theory.\\
\textbf{MSC classification}: 37A05 (Primary) 37A46; 37A50; 05D10 (Secondary).
\section{Introduction}
A set $E\subseteq\N=\{1,2,...\}$ is an IP set if there exists an increasing sequence $(n_k)_{k\in\N}$ in $\N$ such that 
$$\text{FS}((n_k)_{k\in\N})\subseteq E,$$
where the set of finite sums $\text{FS}((n_k)_{k\in\N})$ is defined by 
$$\text{FS}((n_k)_{k\in\N}):=\{\sum_{j\in F}n_j\,|\,F\subseteq \N\,\text{ is finite}\text{ and }F\neq\emptyset\}=\{n_{k_1}+\cdots+n_{k_t}\,|\,k_1<\cdots<k_t,\,t\in\N\}.$$
IP sets play a prominent role in the  theory of measurable and topological multiple recurrence. In particular, multiple recurrence theorems along IP sets lead to strong applications in combinatorics some of which, so far, are not achievable by conventional methods (see \cite{FBook}, \cite{FKIPSzemerediLong}, \cite{berMcCuIPPolySzemeredi}).\\
In \cite{BFM}, Bergelson, Furstenberg, and McCutcheon established a far reaching extension of Khintchine's recurrence theorem. (Khintchine's theorem states that for any probability preserving system $(X,\mathcal A,\mu,T)$, any $A\in\mathcal A$, and any $\epsilon>0$, the set of \textit{large returns}
$$\{n\in\N\,|\,\mu(A\cap T^{-n}A)>\mu^2(A)-\epsilon\}$$
is syndetic, meaning that it has bounded gaps.) We state here a special case of a more general result proved in \cite{BFM}. A set $E\subseteq \N$ is called IP$^*$ if it has a non-trivial intersection with every IP set in $\N$. 
\begin{thm}[Cf. Corollary 2.1 in \cite{BFM}]\label{0.BFMThm}
Let $(X,\mathcal A,\mu,T)$ be an invertible probability preserving system and let $p\in\Z[x]$ be a   non-constant polynomial with $p(0)=0$. For any $A\in\mathcal A$ and any $\epsilon>0$, the set 
$$R_\epsilon^p(A):=\{n\in\N\,|\,\mu(A\cap T^{-p(n)}A)>\mu^2(A)-\epsilon\}$$
is \rm{IP$^*$}.
\end{thm}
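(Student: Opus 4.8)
The plan is to route the IP$^*$ assertion through idempotent ultrafilters and the spectral theorem. By the Galvin--Glazer form of Hindman's theorem, a set $E\subseteq\N$ is an IP set precisely when $E\in q$ for some idempotent $q$ in the semigroup $(\beta\N,+)$; dually, $E$ is IP$^*$ precisely when $E\in q$ for \emph{every} idempotent $q$. I would therefore fix an arbitrary idempotent ultrafilter $q$ and aim to show $R_\epsilon^p(A)\in q$. Writing $U$ for the (unitary) Koopman operator $Ug=g\circ T$ on $L^2(\mu)$ and $f=\mathbf 1_A$, we have $\mu(A\cap T^{-p(n)}A)=\langle U^{p(n)}f,f\rangle$, and the spectral theorem furnishes a finite positive measure $\sigma_f$ on the circle $\mathbb T=\R/\Z$ with $\langle U^{p(n)}f,f\rangle=\int_{\mathbb T}e^{2\pi ip(n)\theta}\,d\sigma_f(\theta)$. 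Since the constant function $\mathbf 1_X$ is $U$-invariant and $\langle f,\mathbf 1_X\rangle=\mu(A)$, the mass $\sigma_f(\{0\})$ --- which equals the squared norm of the orthogonal projection of $f$ onto the space of $U$-invariant functions --- satisfies $\sigma_f(\{0\})\ge|\langle f,\mathbf 1_X\rangle|^2=\mu(A)^2$. The goal becomes the lower bound $\plim{q}\mu(A\cap T^{-p(n)}A)\ge\mu(A)^2$: interchanging the $p$-limit with the finite measure (legitimate by bounded convergence, as the integrands have modulus $1$), this reduces to proving that the phase limit $\phi(\theta):=\plim{q}e^{2\pi ip(n)\theta}$ is a nonnegative real number for every $\theta$, with $\phi(0)=1$; granting this, $\plim{q}\mu(A\cap T^{-p(n)}A)=\int_{\mathbb T}\phi\,d\sigma_f\ge\phi(0)\,\sigma_f(\{0\})\ge\mu(A)^2$.

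The linear case $p(n)=n$ exhibits the mechanism in pure form. Here idempotency $q+q=q$ together with $e^{2\pi i(n+m)\theta}=e^{2\pi in\theta}e^{2\pi im\theta}$ gives $\phi(\theta)=\plim{q}\plim[m]{q}e^{2\pi i(n+m)\theta}=\phi(\theta)^2$; as a $p$-limit of unimodular numbers $\phi(\theta)$ is a well-defined point of the closed unit disk, and the relation $\phi(\theta)=\phi(\theta)^2$ forces $\phi(\theta)\in\{0,1\}$, which is nonnegative with $\phi(0)=1$. (Equivalently, $P:=\plim{q}U^{n}$ is an idempotent contraction, hence an orthogonal projection fixing $\mathbf 1_X$, and $\langle Pf,f\rangle=\|Pf\|^2\ge|\langle f,\mathbf 1_X\rangle|^2$.)

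The genuinely polynomial case is where the difficulty lies and where the depth of \cite{BFM} is needed: the factorization collapses, since $p(n+m)\ne p(n)+p(m)$, so $e^{2\pi ip(n+m)\theta}$ does not split and the one-line idempotent computation is unavailable. I would instead establish the nonnegativity of $\phi(\theta)$ by a van der Corput / PET induction on $\deg p$, with the linear computation above as the base case. Expanding $p(n+m)$ in $m$ produces, besides the term $p(m)$, cross terms whose frequencies are governed by $n$ and whose degree in $m$ is strictly smaller; the idempotency $q+q=q$ allows one to evaluate the inner $m$-limit of these lower-degree phases and feed the outcome back into the induction, collapsing the degree while preserving positivity. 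Controlling these cross terms --- equivalently, reducing a degree-$d$ phase to a family of degree-$(d-1)$ phases without destroying nonnegativity --- is, I expect, the main obstacle, and it is precisely the feature that separates the polynomial theorem from Khintchine's original linear statement.

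Granting the nonnegativity of the phase limits, the chain $\plim{q}\mu(A\cap T^{-p(n)}A)\ge\mu(A)^2>\mu^2(A)-\epsilon$ shows that the defining inequality of $R_\epsilon^p(A)$ holds on a set belonging to $q$, i.e.\ $R_\epsilon^p(A)\in q$. Since $q$ was an arbitrary idempotent ultrafilter, $R_\epsilon^p(A)$ belongs to every idempotent and is therefore IP$^*$, as claimed. The existence of the various $p$-limits (weak compactness) and the measure-theoretic interchanges are routine; all of the content resides in the positivity of the polynomial phase limits.
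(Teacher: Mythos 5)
This theorem is not proved in the paper at all: it is quoted from \cite{BFM}, and the only trace of its proof here is the remark in the introduction that one may take the relevant WOT-limit point $V$ of $\{U^{p(n)}\,|\,n\in E\}$ to satisfy $V\circ V=V$ and $V=V^*$. Measured against that, your reduction framework is sound (IP$^*$ as membership in every idempotent ultrafilter, the spectral identity $\mu(A\cap T^{-p(n)}A)=\int_{\mathbb T}e^{2\pi ip(n)\theta}\,d\sigma_f(\theta)$, the bound $\sigma_f(\{0\})\ge\mu(A)^2$, and the final passage from a $q$-limit $\ge\mu^2(A)$ to $R^p_\epsilon(A)\in q$), but the central step is not carried out and the route you choose for it has a flaw.

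First, the interchange $\text{p-lim}_q\int=\int\text{p-lim}_q$ is not ``legitimate by bounded convergence'': a p-limit along an ultrafilter is not a pointwise limit of a subsequence, the function $\theta\mapsto\text{p-lim}_q\,e^{2\pi ip(n)\theta}$ need not even be $\sigma_f$-measurable, and there is no single subsequence realizing the limit for $\sigma_f$-a.e.\ $\theta$ simultaneously. (Already for $p(n)=n$ and $\sigma_f$ Lebesgue one has $\int e^{2\pi in\theta}d\theta=0$ for all $n\ge1$, while the pointwise p-limit is a $\{0,1\}$-valued function equal to $1$ at every rational $\theta$; nothing in bounded convergence reconciles the two sides.) The repair is exactly your parenthetical remark promoted to the main argument: set $V=\text{p-lim}_q\,U^{p(n)}$ in the weak operator topology, so that $\text{p-lim}_q\langle U^{p(n)}f,f\rangle=\langle Vf,f\rangle$ holds by definition with no interchange, and then show $\langle Vf,f\rangle=\|Vf\|^2\ge|\langle f,\mathbf 1\rangle|^2=\mu(A)^2$ by proving $V$ is an orthogonal projection fixing $\mathbf 1$. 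Second --- and this is the entire content of the theorem --- the case $\deg(p)>1$ is not proved: your idempotency computation handles only the linear case, and you defer the polynomial case to ``a van der Corput / PET induction'' while yourself identifying the control of the cross terms as the main obstacle. That obstacle is precisely what \cite{BFM} resolves (their Theorem 1.8, an IP van der Corput/PET argument carried out at the operator level along sub-IP-rings, yielding a self-adjoint idempotent limit), so without it the proposal establishes nothing beyond Khintchine's linear statement. I would also flag that your stated target --- nonnegativity of $\text{p-lim}_q\,e^{2\pi ip(n)\theta}$ for \emph{every} $\theta$ and \emph{every} idempotent $q$ --- is stronger than what \cite{BFM} proves and than what is needed: their conclusion is existential (for each IP set $E$, \emph{some} WOT-limit point over $E$, obtained after refining to a sub-IP-ring, is a projection), and that already yields, for each IP set $E$, an $n\in E$ with $\mu(A\cap T^{-p(n)}A)>\mu^2(A)-\epsilon$, hence IP$^*$.
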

\begin{rem}
    It is not hard to show that IP$^*$ sets are syndetic (see, for example, \cite[Lemma 9.2]{FBook}). Moreover, IP$^*$ sets have the  finite intersection property: if  $E,F\subseteq\N$ are IP$^*$, then $E\cap F$ is also IP$^*$ \cite[Lemma 9.5]{FBook}.  It follows that IP$^*$ is a stronger property  than syndetic (clearly the family of syndetic sets does not have the finite intersection property) and, hence, \cref{0.BFMThm} is an enhancement of Khintchine's  theorem even when $p(x)$ is linear. We remark in passing that the finite intersection property of IP$^*$ sets is a consequence of Hindman's theorem \cite{HIPPartitionRegular} which implies that if  $S$ is an IP set in $\N$ and $E\cup F=S$, then at least one of  $E$ and $F$ is also an IP set.
\end{rem}

We now define IP$_r$ sets, a finitary version of IP sets. Given $r\in\N$, a set $E\subseteq\N$ is called IP$_r$ if there exist positive integers $n_1<\cdots<n_r$ with 
$$\text{FS}((n_k)_{k=1}^r):=\{\sum_{j\in F}n_j\,|\,F\subseteq \{1,...,r\}\text{ and }F\neq \emptyset\}\,\subseteq E.$$
A set $\Gamma\subseteq \N$ is called an IP$_0$ set if for every $r\in\N$, $\Gamma$ is an IP$_r$ set.\\
 IP$_0$ sets have the \textit{Ramsey property}. Namely, if $S$ is an IP$_0$ set in $\N$ and $E\cup F=S$, then at least one of $E$ and $F$ is IP$_0$. This property of IP$_0$ sets is a consequence of  
the following fact about IP$_r$ sets [Cf. \cite{BDonaldRobertsonIP_r}, Proposition 2.3]: Given $t,r\in\N$ there exists an $R\in\N$ such that if $E\subseteq \N$ is an IP$_R$ set and $\{C_1,...,C_t\}$  forms a partition of $E$, then one of $C_1,...,C_t$ is an IP$_r$ set. It is worth noting that IP$_r$ and IP$_0$ sets naturally appear in the framework of multiple recurrence and combinatorial applications thereof  \cite{FKIPSzemerediLong}, \cite{BerUltraAcrossMath}, \cite{bergelsonShiftedPrimes},  \cite{AlmostIPBerLeib}, \cite{IPrRecNilsystems}.\\

In view of the potential new applications to combinatorics, it is of interest to obtain a "finitary" version of \cref{0.BFMThm} involving IP$_r$ sets. Given $r\in\N$, a set $E\subseteq \N$ is called IP$_r^*$ if it has a non-empty intersection with every IP$_r$ set in $\N$. Similarly, a set $E\subseteq \N$ is called IP$_0^*$ if it has a non-empty intersection with every IP$_0$ set in $\N$. It is easy to see that a set is IP$_0^*$ if and only if it is  IP$_r^*$ for some $r\in\N$. 
\begin{question}\label{0.MainQuestion}
    Can  \cref{0.BFMThm} be refined by replacing IP$^*$ with IP$_r^*$? In other words, is it true that for  any non-constant $p\in\Z[x]$ with $p(0)=0$, any invertible  probability preserving system $(X,\mathcal A,\mu, T)$,  any $A\in\mathcal A$, and any $\epsilon>0$, the set $R_\epsilon^p(A)$ is IP$_0^*$ (i.e. $R_\epsilon^p(A)$ is IP$_r^*$ for some $r=r(\epsilon)$)?
\end{question}
In this paper we show  that, in general, \Cref{0.MainQuestion} has a negative answer.  
\begin{namedthm*}{Theorem A}
For any $p\in\Z[x]$ with $p(0)=0$ and $\deg(p)>1$, there exists an invertible probability preserving system $(X,\mathcal A,\mu,T)$, a set $A\in\mathcal A$ with $\mu(A)=\frac{1} {2}$, a  $\delta>0$, and an IP$_0$ set $\Gamma$ such that for any $n\in\Gamma$, 
$$\mu(A\cap T^{-p(n)}A)<\mu^2(A)-\delta.$$
\end{namedthm*}
Let $(X,\mathcal A,\mu, T)$ be an invertible probability preserving system. By the traditional
abuse of notation, we denote the unitary operator induced by $T$ on $L^2(\mu)$ by $T$.
Thus, $T f = f  \circ T$ for any $f$ in $L^2(\mu)$.
As we will show in Section 5 below, the invertible probability preserving system $(X,\mathcal A,\mu, T)$ in the formulation of  Theorem A can be chosen  to be both weakly mixing (i.e. $T:L^2(\mu)\rightarrow L^2(\mu)$ has no non-trivial eigenvectors) and rigid (i.e. for each $f\in L^2(\mu)$, there exists an increasing sequence $(n_k)_{k\in\N}$ with $\lim_{k\rightarrow\infty}T^{n_k}f=f$ in the $L^2$-norm):
\begin{namedthm*}{Proposition B}
The invertible probability preserving system $(X,\mathcal A,\mu, T)$ in the statement of Theorem A can be picked to be both   weakly mixing  and rigid. 
\end{namedthm*}
We would like to mention that the following question which is a special case of  a question formulated  in 
 \cite[p. 41]{BerUltraAcrossMath} and generalizes \Cref{0.MainQuestion}, is still open. (In its full generality, this question in  \cite[p. 41]{BerUltraAcrossMath} deals with \textit{multiple} recurrence and was originally motivated by \cite{FKIPSzemerediLong} and \cite{berMcCuIPPolySzemeredi}.) 
 \begin{question}\label{0.WeakIP_rKhintchine}
     Is it true that for any non-constant $p\in\Z[x]$ with $p(0)=0$, any invertible probability preserving system $(X,\mathcal A,\mu,T)$, and any $A\in\mathcal A$ with $\mu(A)>0$, there exists a $c>0$ for which the set 
     $$R=\{n\in\N\,|\,\mu(A\cap T^{-p(n)}A)>c\}$$
     is IP$_0^*$?
 \end{question}
To make the picture more complete we now state three  results which provide conditions 
under which Questions \ref{0.MainQuestion} and \ref{0.WeakIP_rKhintchine} have an affirmative answer. We remark that, to some extent, these results made it tempting to believe that the answer to \Cref{0.MainQuestion} is positive regardless of the properties of the transformation  $T$ and the degree of  $p\in\Z[x]$.\\

The first of these results, \cref{0.DiscreteSpectrum} below, deals with invertible $\mu$-measure preserving transformations  $T$ having discrete spectrum  (meaning that $L^2(\mu)$ is spanned by the eigenvectors of $T$) and it follows from  Theorem 7.7 in \cite{BerUltraAcrossMath}. 
\begin{thm}\label{0.DiscreteSpectrum}
    Let $(X,\mathcal A,\mu)$ be a probability space and let $p\in\Z[x]$ be a non-constant polynomial with $p(0)=0$.  Suppose that $T:X\rightarrow X$ is an invertible $\mu$-preserving transformation with  discrete spectrum. For any $A\in\mathcal A$ and any $\epsilon>0$, the set
    \begin{equation}\label{0.DiscreteSpectrumEq}
    \{n\in\N\,|\,\mu(A\cap T^{-p(n)}A)>\mu(A)-\epsilon\}
    \end{equation}
     is \rm{IP$_0^*$}.
\end{thm}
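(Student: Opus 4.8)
The plan is to deduce the statement from the cited polynomial IP$_0^*$ recurrence result by first converting the measure-theoretic inequality into an $L^2$-norm estimate and then linearizing it via the discrete spectrum hypothesis. Since $\mathbf 1_{T^{-p(n)}A}=T^{p(n)}\mathbf 1_A$, one has
$$\mu(A\cap T^{-p(n)}A)=\langle T^{p(n)}\mathbf 1_A,\mathbf 1_A\rangle=\mu(A)-\langle \mathbf 1_A-T^{p(n)}\mathbf 1_A,\mathbf 1_A\rangle,$$
so by Cauchy--Schwarz and $\|\mathbf 1_A\|=\sqrt{\mu(A)}\le 1$ we get $\mu(A\cap T^{-p(n)}A)\ge \mu(A)-\|T^{p(n)}\mathbf 1_A-\mathbf 1_A\|$. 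It therefore suffices to prove that for every $\delta>0$ the set $\{n\in\N:\|T^{p(n)}\mathbf 1_A-\mathbf 1_A\|<\delta\}$ is IP$_0^*$. This is exactly where the conclusion $\mu(A)-\epsilon$ (rather than $\mu^2(A)-\epsilon$) originates: we are forcing a near-return of $\mathbf 1_A$ to itself, which is available for discrete spectrum systems but not in general.

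Next I would linearize. As $T$ has discrete spectrum, its eigenvectors span $L^2(\mu)$, so I can fix $g=\sum_{j=1}^k c_j\phi_j$ with $T\phi_j=e^{2\pi i\alpha_j}\phi_j$, the $\phi_j$ orthonormal, and $\|\mathbf 1_A-g\|<\delta/3$. Since $T$ is unitary, $\|T^{p(n)}\mathbf 1_A-\mathbf 1_A\|\le 2\|\mathbf 1_A-g\|+\|T^{p(n)}g-g\|$ and
$$\|T^{p(n)}g-g\|^2=\sum_{j=1}^k|c_j|^2\bigl|e^{2\pi i\alpha_j p(n)}-1\bigr|^2.$$
Using $|e^{2\pi i\theta}-1|\le 2\pi\,\operatorname{dist}(\theta,\Z)$, the problem reduces to finding, inside any prescribed IP$_0$ set, an $n$ for which $\operatorname{dist}(\alpha_j p(n),\Z)$ is simultaneously small for $j=1,\dots,k$; equivalently, to showing that for every $\eta>0$ the set $\{n\in\N:\max_{1\le j\le k}\operatorname{dist}(\alpha_j p(n),\Z)<\eta\}$ is IP$_0^*$.

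This last reduction is the heart of the matter and the main obstacle. When $p$ is linear the set above is IP$_0^*$ by an elementary pigeonhole argument: among the $r+1$ partial sums of $(\alpha_j n_i\bmod 1)_j$ on the torus $\mathbb T^k$, two must lie within $\eta$, and their difference is the block sum over a set of consecutive indices, which is a legitimate finite sum. For $\deg(p)>1$ this argument breaks down, because $p(\sum_{i\in F}n_i)$ is no longer additive in the $n_i$; controlling a genuine polynomial evaluated along finite sums is precisely what requires the finitary polynomial IP recurrence machinery. I would invoke Theorem 7.7 of \cite{BerUltraAcrossMath}, applied to the torus rotation determined by $(\alpha_1,\dots,\alpha_k)$ and the polynomial $p$, to conclude that this set is IP$_r^*$ for some $r=r(\eta,k,\alpha_1,\dots,\alpha_k)$, hence IP$_0^*$. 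Unwinding the reductions (take $\delta=\epsilon$, choose $g$, and then choose $\eta$ small enough that $\|T^{p(n)}g-g\|<\delta/3$ whenever $\max_j\operatorname{dist}(\alpha_j p(n),\Z)<\eta$) shows that the set in \eqref{0.DiscreteSpectrumEq} contains an IP$_r^*$ set and is therefore IP$_0^*$.
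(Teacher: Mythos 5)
Your proposal is correct and is essentially the argument the paper has in mind: the paper gives no proof of this theorem beyond attributing it to Theorem 7.7 of \cite{BerUltraAcrossMath}, and your chain of reductions (Cauchy--Schwarz to pass from $\mu(A\cap T^{-p(n)}A)>\mu(A)-\epsilon$ to an $L^2$ near-return of $\mathbf 1_A$, spectral approximation by finitely many eigenfunctions, and then simultaneous smallness of $\alpha_j p(n)$ mod $1$ along IP$_r$ sets via that same cited result) is the standard way to derive it. The only substantive content not outsourced to the citation is the correct observation that the discrete spectrum hypothesis is what upgrades the bound from $\mu^2(A)-\epsilon$ to $\mu(A)-\epsilon$, which matches the paper's framing.
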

\begin{rem}
    By Theorem A, the assumption that $T$ has discrete spectrum in \cref{0.DiscreteSpectrum} cannot be removed (even if one replaces $\mu(A)$ with $\mu^2(A)$ in formula \eqref{0.DiscreteSpectrumEq} above). Also note that \cref{0.DiscreteSpectrum} implies that $(X,\mathcal A,\mu,T)$ in Theorem A cannot have discrete spectrum.
\end{rem}

The following theorem can be viewed as one more variant  of Khintchine's recurrence theorem; it follows from \cite[Theorem 2.1]{AlmostIPBerLeib}.
\begin{thm}\label{0.IP0Khintchine}
    Let $(X,\mathcal A,\mu,T)$ be an invertible probability preserving system. For any $A\in\mathcal A$, any $d\in\N$, and any $\epsilon>0$, the set 
    \begin{equation}\label{0.IP0KhinthchineEq}
    R_{\epsilon,d}(A):=\{n\in\N\,|\,\mu(A\cap T^{-dn}A)>\mu^2(A)-\epsilon\}
    \end{equation}
    is \rm{IP$_0^*$}.
\end{thm}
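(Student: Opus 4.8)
The plan is to treat this linear case by a direct Hilbert-space pigeonhole argument that absorbs $d$ into the dynamics. First I would set $U=T^{d}$, which is again a unitary operator on $L^2(\mu)$, and decompose $\mathbbm{1}_A=\mu(A)\cdot 1+g$ where $g=\mathbbm{1}_A-\mu(A)$ satisfies $\int g\,d\mu=0$. Since $U$ fixes constants and preserves the integral, the two cross terms vanish and one computes
$$\mu(A\cap T^{-dn}A)=\langle U^{n}\mathbbm{1}_A,\mathbbm{1}_A\rangle=\mu^2(A)+\langle U^{n}g,g\rangle.$$
Hence $n\in R_{\epsilon,d}(A)$ if and only if $\langle U^{n}g,g\rangle>-\epsilon$, so it suffices to show that every IP$_r$ set, for a suitable $r=r(\epsilon)$, contains some $n$ with $\langle U^{n}g,g\rangle>-\epsilon$. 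I record that $\|g\|^2=\mu(A)-\mu^2(A)\le\frac14$ and that, $g$ being real-valued, every inner product $\langle U^{n}g,g\rangle$ is real.

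Next, given an IP$_r$ set $\text{FS}((n_k)_{k=1}^r)$ with $n_1<\cdots<n_r$, I would introduce the prefix sums $S_0=0$ and $S_j=n_1+\cdots+n_j$ for $1\le j\le r$, together with the vectors $w_j=U^{S_j}g$, all of norm $\|g\|$. The key observation is that for $0\le i<j\le r$,
$$\langle w_j,w_i\rangle=\langle U^{S_j-S_i}g,g\rangle,\qquad S_j-S_i=n_{i+1}+\cdots+n_j\in\text{FS}((n_k)_{k=1}^r),$$
so each pairwise inner product of the $w_j$ is realized by a genuine element of the finite-sums set. It is therefore enough to force one of these $\binom{r+1}{2}$ inner products to exceed $-\epsilon$.

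Finally I would invoke positivity of $\bigl\|\sum_{j=0}^{r}w_j\bigr\|^2$. Expanding gives
$$0\le\Bigl\|\sum_{j=0}^{r}w_j\Bigr\|^2=(r+1)\|g\|^2+2\!\!\sum_{0\le i<j\le r}\!\!\langle w_j,w_i\rangle.$$
If every pairwise inner product were $\le-\epsilon$, the right-hand side would be at most $(r+1)\|g\|^2-r(r+1)\epsilon=(r+1)(\|g\|^2-r\epsilon)$, forcing $r\le\|g\|^2/\epsilon\le 1/(4\epsilon)$. Choosing $r=r(\epsilon):=\lfloor 1/(4\epsilon)\rfloor+1$ rules this out, so some pair $(i,j)$ yields $\langle U^{S_j-S_i}g,g\rangle>-\epsilon$ with $S_j-S_i\in\text{FS}((n_k)_{k=1}^r)$. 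This shows $R_{\epsilon,d}(A)$ meets every IP$_r$ set, i.e.\ it is IP$_r^*$ and hence IP$_0^*$.

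I do not expect a serious obstacle here: the entire content lies in choosing the prefix-sum vectors $w_j$ so that their pairwise differences land in the finite-sums set, after which a one-line positive-semidefiniteness estimate finishes the proof. The two points deserving a little care are verifying that the cross terms in the decomposition vanish (this is exactly where the linear form $p(n)=dn$ is used, allowing $d$ to be absorbed into $U=T^{d}$) and checking that the threshold $r(\epsilon)$ depends only on $\epsilon$, being independent of $d$, of $A$, and of the system, which is precisely what makes the conclusion uniform. This is the degree-one specialization of the more general mechanism of \cite[Theorem 2.1]{AlmostIPBerLeib}, and the argument above is essentially its extraction in the linear case.
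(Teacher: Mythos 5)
Your argument is correct. Note, however, that the paper does not prove this theorem at all: it is stated as a known result and attributed to \cite[Theorem 2.1]{AlmostIPBerLeib}, so there is no in-paper proof to compare against. What you have written is a clean, self-contained derivation of the cited fact. Two remarks on how it sits relative to the paper's framing. First, your positivity argument with the prefix-sum vectors $w_0,\dots,w_r$ actually proves the stronger statement highlighted in the remark following the theorem: it shows that $R_{\epsilon,d}(A)$ meets the difference set $\{S_j-S_i\,|\,0\le i<j\le r\}$ of \emph{any} $r+1$ integers, i.e.\ that $R_{\epsilon,d}(A)$ is $\Delta_{r+1}^*$, and the passage to IP$_r^*$ is exactly your observation that differences of prefix sums of $n_1<\cdots<n_r$ land in $\text{FS}((n_k)_{k=1}^r)$. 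Second, all the steps check out: the cross terms vanish because $T$ (hence $U=T^d$) preserves the integral of $g=\mathbbm{1}_A-\mu(A)$; the inner products are real since $g$ is real; the count $(r+1)\|g\|^2-r(r+1)\epsilon\ge 0$ forces $r\le\|g\|^2/\epsilon\le 1/(4\epsilon)$, so $r(\epsilon)=\lfloor 1/(4\epsilon)\rfloor+1$ works and is uniform in $d$, $A$, and the system, as you note. This is precisely why the result is confined to the linear case and why, by Theorem A of the paper, no such argument can survive for $\deg(p)>1$: there is no analogue of the prefix-sum trick that places the \emph{polynomial images} of finite sums inside a difference set.
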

\begin{rem}
As a matter of fact, Theorem 2.1 in \cite{AlmostIPBerLeib} states that the sets of the form $R_{\epsilon,d}(A)$ have a property even stronger than that of IP$_0^*$: they are $\Delta_0^*$. Given $r>1$, a set $E\subseteq \N$ is a $\Delta_r$ set if there exist positive integers $n_1<\cdots<n_r$ with 
$$\{n_j-n_i\,|\,1\leq i<j\leq r\}\subseteq E.$$
A set $E\subseteq \N$ is called a $\Delta_r^*$ set if it has a non-empty intersection with every $\Delta_r$ set in $\N$. A set $E\subseteq \N$ is called $\Delta_0^*$ if it is a $\Delta_r^*$ set  for some $r\in\N$. By a  modification of an argument presented in  \cite[p. 177]{FBook}, one can show that for every $p\in\Z[x]$ with $p(0)=0$ and  $\deg(p)>1$ the sets of the form 
$$\{n\in\N\,|\,\mu(A\cap T^{-p(n)}A)>0\}$$
 are not necessarily $\Delta_0^*$ (see also \cite[Theorem 1.4, case $\ell=1$]{BerZel_JCTA_iteratedDifferences2021}). Thus, the variant of \Cref{0.WeakIP_rKhintchine} dealing with $\Delta_0^*$ sets instead of IP$_0^*$ has a negative answer.
\end{rem}
The next results demonstrates that, roughly speaking, the sets of the form $R_\epsilon^p(A)$ are "almost" IP$_0^*$. For a finite set  $F$, we denote its cardinality by $|F|$.
\begin{thm}[Cf. Theorem 1.8 in \cite{AlmostIPBerLeib}]
Let $(X,\mathcal A,\mu,T)$ be an invertible ergodic probability preserving system and let $p\in\Z[x]$ be a   non-constant polynomial with $p(0)=0$. For any $A\in\mathcal A$ with $\mu(A)>0$ and any $\epsilon>0$, there exists a set $E\subseteq \N$ with 
$$\lim_{N-M\rightarrow\infty}\frac{|E\cap \{M+1,...,N\}|}{N-M}=0$$
and such that $E\cup R_\epsilon^p(A)$
is \rm{IP$^*_0$}.
\end{thm}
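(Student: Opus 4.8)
The plan is to split the correlation sequence according to the Kronecker (discrete-spectrum) part of the system and its orthogonal complement, dispatch the first part by \Cref{0.DiscreteSpectrum} and the second by a Banach-averaged equidistribution estimate that manufactures the set $E$. Writing $e(x)=e^{2\pi i x}$, put $f=\mathbbm{1}_A$ and $f_0=f-\mu(A)$; since $T^{p(n)}$ fixes constants and $f_0$ has mean zero, one gets $\mu(A\cap T^{-p(n)}A)=\mu^2(A)+g(n)$ with $g(n):=\langle f_0,T^{p(n)}f_0\rangle$, so that $\N\setminus R_\epsilon^p(A)=\{n\in\N\,|\,g(n)\le-\epsilon\}$. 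Let $\mathcal Z$ be the Kronecker $\sigma$-algebra (generated by the eigenfunctions of $T$), and decompose $f_0=f_{\mathrm K}+f_{\mathrm w}$ with $f_{\mathrm K}=\mathbb E[f_0\,|\,\mathcal Z]$ and $f_{\mathrm w}=f_0-f_{\mathrm K}$. As $\mathcal Z$ is $T$-invariant, $T$ preserves $L^2(\mathcal Z)$ and its orthocomplement, the cross terms vanish, and $g(n)=g_{\mathrm K}(n)+g_{\mathrm w}(n)$ where $g_\bullet(n)=\langle f_\bullet,T^{p(n)}f_\bullet\rangle=\int_{\mathbb T}e(p(n)t)\,d\sigma_\bullet(t)$ for the (positive) spectral measures $\sigma_{\mathrm K},\sigma_{\mathrm w}$ on $\mathbb T=\R/\Z$. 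Here $\sigma_{\mathrm K}$ is purely atomic with atoms only at the \emph{nonzero} eigenvalues (ergodicity removes any atom at $0$), whereas $\sigma_{\mathrm w}$ is \emph{non-atomic}. It then suffices to prove: (A) the set $E:=\{n\,|\,|g_{\mathrm w}(n)|>\epsilon/2\}$ has zero upper Banach density, and (B) the set $P:=\{n\,|\,g_{\mathrm K}(n)>-\epsilon/2\}$ is IP$_0^*$. Indeed, for $n\in P\setminus E$ we get $g(n)>-\epsilon$, so $P\subseteq E\cup R_\epsilon^p(A)$, and any superset of an IP$_0^*$ set is IP$_0^*$.

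For (A) I would bound the windowed second moment. Setting $I_{M,N}(\theta)=\frac{1}{N-M}\sum_{n=M+1}^N e(p(n)\theta)$, Fubini gives
\[
\frac{1}{N-M}\sum_{n=M+1}^{N}|g_{\mathrm w}(n)|^2=\iint_{\mathbb T\times\mathbb T} I_{M,N}(t-s)\,d\sigma_{\mathrm w}(t)\,d\sigma_{\mathrm w}(s).
\]
By Weyl's theorem for the integer polynomial $p$ (with the bound uniform in the starting point $M$, since $p(\,\cdot+M)$ has the same leading coefficient as $p$), $I_{M,N}(\theta)\to 0$ as $N-M\to\infty$ for every irrational $\theta$, while $|I_{M,N}|\le 1$ throughout. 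Because $\sigma_{\mathrm w}$ is non-atomic, the set $\{(t,s)\,|\,t-s\in\Q\}$ — a countable union of translates of the diagonal — is $\sigma_{\mathrm w}\times\sigma_{\mathrm w}$-null, so for almost every $(t,s)$ the difference $t-s$ is irrational. Bounded convergence then forces $\sup_M\frac{1}{N-M}\sum_{n=M+1}^{N}|g_{\mathrm w}(n)|^2\to 0$ as $N-M\to\infty$, and Chebyshev's inequality converts this into $\lim_{N-M\to\infty}\frac{|E\cap\{M+1,\dots,N\}|}{N-M}=0$.

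For (B) I would reduce to \Cref{0.DiscreteSpectrum}. Enumerate the atoms of $\sigma_{\mathrm K}$ as $(\theta_j,w_j)$ with $w_j>0$, $\theta_j\neq 0$, $\sum_j w_j=\|f_{\mathrm K}\|^2$, and fix $J$ with $\sum_{j>J}w_j<\epsilon/4$. Consider the group rotation $R:x\mapsto x+(\theta_1,\dots,\theta_J)$ on $\mathbb T^{J}$ with Haar measure $m$; it is an invertible measure-preserving system with discrete spectrum, and the hypothesis $p(0)=0$ is exactly what lets \Cref{0.DiscreteSpectrum} apply to it, giving that $\{n\,|\,m(B\cap R^{-p(n)}B)>m(B)-\epsilon'\}$ is IP$_0^*$ for a small ball $B$ about $0$ and small $\epsilon'>0$. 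Choosing $B,\epsilon'$ appropriately, membership in this set forces $\|p(n)\theta_j\|<\eta$ (distance to $\Z$) for all $j\le J$ with $\eta<1/4$, whence, using that $g_{\mathrm K}$ is real, $g_{\mathrm K}(n)=\sum_j w_j\cos(2\pi p(n)\theta_j)\ge \cos(2\pi\eta)\sum_{j\le J}w_j-\sum_{j>J}w_j>-\epsilon/2$. Thus $P$ contains an IP$_0^*$ set, hence is IP$_0^*$, and combining with (A) completes the argument.

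The main obstacle is step (A), and specifically the \emph{uniformity in the window position} needed to get Banach density zero rather than mere natural density zero. Two features carry the argument: non-atomicity of $\sigma_{\mathrm w}$ is precisely what annihilates the contribution of the rational-difference set $\{t-s\in\Q\}$, on which $I_{M,N}$ need not cancel — this is why one must pass to the orthocomplement of the \emph{full} Kronecker factor and not merely its rational part; and the Weyl cancellation must be uniform in $M$, which is why the estimate is organized around the (translation-invariant) leading coefficient of $p$. Everything else is routine bookkeeping once these two points are in place.
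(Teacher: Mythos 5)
The paper does not actually prove this statement: it is quoted with the attribution ``Cf.\ Theorem 1.8 in \cite{AlmostIPBerLeib}'' and used as background, so there is no in-paper proof to compare against. Judged on its own, your argument is correct and is the natural route to this result: split $f-\mu(A)$ into its Kronecker component and the orthogonal (continuous-spectrum) component, handle the first via \cref{0.DiscreteSpectrum} applied to a torus rotation built from finitely many eigenvalue frequencies, and absorb the second into a Banach-density-zero exceptional set via a second-moment/van der Corput--Weyl estimate. All the individual steps check out: the cross terms vanish because the Kronecker factor is $T$-invariant; $\sigma_{\mathrm w}$ is non-atomic precisely because $f_{\mathrm w}$ is orthogonal to every eigenfunction, which kills the rational-difference set where the exponential averages need not cancel; the Chebyshev step converts the uniform second-moment decay into Banach density zero; and in part (B) the reduction to a small ball about $0$ in $\mathbb T^J$ correctly extracts $\|p(n)\theta_j\|<\eta$ from positive intersection, with the tail $\sum_{j>J}w_j<\epsilon/4$ and $\cos(2\pi\eta)\ge 0$ giving $g_{\mathrm K}(n)>-\epsilon/2$. (Ergodicity is only used to exclude an atom of $\sigma_{\mathrm K}$ at $0$, and as you could note, such an atom would only contribute positively to $g_{\mathrm K}$, so it is harmless anyway.)

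Two load-bearing facts deserve explicit citation rather than a parenthetical: (i) the \emph{well-distribution} (not merely uniform distribution) of $(p(n)\theta)_{n}$ modulo $1$ for irrational $\theta$, i.e.\ $\sup_M|I_{M,M+L}(\theta)|\to 0$ as $L\to\infty$ --- this follows from the van der Corput difference theorem for well-distributed sequences together with the linear base case, or from the Weyl inequality, whose bound depends only on the leading coefficient $a_d\theta$ and hence is uniform in the shift $M$; and (ii) the Koopman--von Neumann-type fact that a function orthogonal to all eigenfunctions has continuous spectral measure. Both are classical, and with references supplied the proof is complete. Your argument also yields slightly more than stated, since the exceptional set $E$ you produce depends on $A$ and $\epsilon$ only through the threshold $\epsilon/2$ applied to $g_{\mathrm w}$, and the IP$_0^*$ set you exhibit inside $E\cup R^p_\epsilon(A)$ comes from a genuine discrete-spectrum recurrence statement, which is consistent with the heuristic stated in the paper that $R_\epsilon^p(A)$ is ``almost'' IP$_0^*$ with the obstruction living entirely in the weakly mixing part of the system.
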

We conclude this introduction by stating various results each of which can be viewed as the unitary counterpart of one of the results mentioned above. Let  $\mathcal H$ be a (separable) Hilbert space and for any set of bounded linear operators $B$, let $\overline{B^{\text{WOT}}}$ denote the closure of $B$ with respect to the weak operator topology. 
\begin{enumerate}
    \item [-] (Cf. \cref{0.BFMThm} above) Let $U:\mathcal H\rightarrow\mathcal H$ be a unitary operator, let $p\in\Z[x]$ be a non-constant polynomial with $p(0)=0$, and let $E\subseteq \N$ be an IP set. There exists $V\in\overline{\{U^{p(n)}\,|\,n\in E\}^{WOT}}$ such that for any $\xi\in\mathcal H$, $\langle V\xi,\xi\rangle\geq 0$. As a matter of fact, an immediate consequence of  Theorem 1.8 in \cite{BFM} is that one can take $V$ to satisfy $V\circ V=V$ and $V=V^*$.
    \item [-] (Cf. \cref{0.IP0Khintchine} above) Let $U:\mathcal H\rightarrow\mathcal H$ be a unitary operator and let $E\subseteq \N$ be an IP$_0$ set. There exists $V\in\overline{\{U^{n}\,|\,n\in E\}^{WOT}}$ such that for any $\xi\in\mathcal H$, $\text{Re}(\langle V\xi,\xi\rangle)\geq 0$.
    \item [-] (Cf. Theorem A above) Let $p\in\Z[x]$ be a non-constant polynomial with $p(0)=0$ and $\deg(p)>1$. There exists an IP$_0$ set $\Gamma\subseteq \N$, a unitary operator $U:\mathcal H\rightarrow\mathcal H$, a $\delta>0$, and a $\xi\in\mathcal H$ such that for any $n\in\Gamma$, $\langle U^{p(n)}\xi,\xi\rangle<-\delta$.
\end{enumerate}
The structure of this paper is as follows. In Section 2 we review the necessary background on Gaussian systems. In Section 3 we prove Theorem A with the help of \cref{4.SpectralMeasure}, our main auxiliary result. In Section 4, we prove \cref{4.SpectralMeasure}. In Section 5 we prove Proposition B. \\

\textbf{Acknowledgments:} The author would like to thank Professor Vitaly Bergelson for his valuable input during the preparation of this manuscript. 
\section{Background on Gaussian systems}
In this section we review the necessary background on  a special type of probability preserving systems  known as  \textbf{Gaussian systems}.  For the construction of a Gaussian system see 
 \cite[Chapter 8]{cornfeld1982ergodic} or \cite[Appendix C]{kechris2010Global}, for example. \\

Let $\mathcal A=\text{Borel}(\R^\Z)$ and  let $T:\R^{\Z}\rightarrow\R^{\Z}$ be defined by 
$$T(\omega)(n)=\omega(n+1).$$
For any given Borel probability measure $\rho$ on $\mathbb T$ let $\phi=\phi_\rho:\Z\rightarrow \mathbb C$ be defined by 
$$\phi(n)=\int_\mathbb T e^{2\pi inx}\text{d}\rho(x).$$
Suppose that $\phi(n)=\phi(-n)$ for each $n\in\Z$. Then 
 there exists a $T$-invariant  probability measure $\gamma$ on $\mathcal A$ with the following properties:
\begin{enumerate}
\item [G.1] For any given $n\in\Z$, let $X_{n}:\R^\Z\rightarrow \R$ be the canonical projection defined by $X_{n}(\omega)=\omega(n)$. The random variable $X_{n}$ has normal distribution with mean zero and variance $1$. In other words, for any Borel measurable $A\subseteq \R$ and any $n\in\Z$,
\begin{equation}\label{GaussianMarginal}
\gamma(X_n^{-1}A)=\frac{1}{\sqrt{2\pi}}\int_Ae^{-\frac{x^2}{2}}\text{d}x.
\end{equation}
\item [G.2] For any  $m,n\in\Z$, 
$$\int_{\R^\Z}X_{m}X_{n}\text{d}\gamma=\phi(m-n).$$
\item [G.3] Given any $m,n\in\Z$, let 
$$C=\begin{pmatrix}
1 & \phi(m-n)\\
\phi(n-m) & 1
\end{pmatrix}$$
(We remark that $\phi(m-n)=\phi(n-m)\in\R$ and, hence, $C$ is symmetric and real-valued). If $\det (C)\neq 0$, then 
$$\gamma(X^{-1}_{m}(0,\infty)\cap X^{-1}_{n}(0,\infty))=\frac{1}{2\pi\sqrt{\det(C)}}\int_{0}^\infty\int_0^\infty e^{-\frac{1}{2} (x,y)C^{-1}(x,y)^T}\text{d}x\text{d}y.$$
\item [G.4] (Cf. Theorem 2.3 item (i) in \cite{zelada2022GaussianETDS}) For any sequence $(n_k)_{k\in\N}$ in $\Z$ one has that  for every $A,B\in\mathcal A$,
$$\lim_{k\rightarrow\infty}\gamma(A\cap T^{-n_k}B)=\gamma(A)\gamma(B)$$
 if and only if  for every $m\in\Z$,
 $$\lim_{k\rightarrow\infty}\phi(m+n_k)=0.$$
 \item [G.5] (Cf. Theorem 2.3 item (ii) in \cite{zelada2022GaussianETDS}) For any sequence $(n_k)_{k\in\N}$ in $\Z$ one has that  for every $A,B\in\mathcal A$,
$$\lim_{k\rightarrow\infty}\gamma(A\cap T^{-n_k}B)=\gamma(A\cap B)$$
 if and only if  for every $m\in\Z$,
 $$\lim_{k\rightarrow\infty}\phi(m+n_k)=\phi(m).$$
\end{enumerate}
The invertible probability preserving system $(\R^{\Z},\mathcal A,\gamma, T)$ is called the \textbf{Gaussian system} associated with $\rho$.
\begin{rem}
    One can also define the Gaussian system associated with any finite, positive Borel measure defined on $\mathbb T$. The properties of such a Gaussian system are analogous to those of a Gaussian system associated with a probability measure. For more on Gaussian systems see \cite{cornfeld1982ergodic} or \cite{kechris2010Global}, for example. 
\end{rem}
The following lemma will be utilized in the sequel. For any $\delta \in (-1,1)$ we will let 
$$\Sigma_\delta=\begin{pmatrix}
    1 & \delta\\
    \delta & 1
\end{pmatrix}$$
Observe that 
\begin{equation}\label{5.eq:SigmaInverse}
\Sigma^{-1}_\delta=\frac{1}{1-\delta^2}\begin{pmatrix}
    1 & -\delta \\
    -\delta & 1
\end{pmatrix}.
\end{equation}
\begin{lem}\label{3.Lem:GaussianIntegral}
    Let $\delta\in (-1,1)$. Then 
    $$\frac{1}{2\pi \sqrt{1-\delta^2}}\int_0^\infty\int_0^\infty e^{-\frac{1}{2}(x,y)\Sigma_\delta^{-1}(x,y)^T}\text{d}x\text{d}y=\frac{1}{4}+\frac{\sin^{-1}(\delta)}{2\pi}.$$
    So, in particular, if $\delta<0$, 
     $$\frac{1}{2\pi \sqrt{1-\delta^2}}\int_0^\infty\int_0^\infty e^{-\frac{1}{2}(x,y)\Sigma_\delta^{-1}(x,y)^T}\text{d}x\text{d}y<\frac{1}{4}.$$
\end{lem}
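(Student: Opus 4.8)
The plan is to reduce the double integral to a single elementary trigonometric integral by passing to polar coordinates, and then to evaluate that integral in closed form. Using the explicit form of $\Sigma_\delta^{-1}$ recorded in \eqref{5.eq:SigmaInverse}, the quadratic form in the exponent is
$$(x,y)\Sigma_\delta^{-1}(x,y)^T = \frac{x^2 - 2\delta xy + y^2}{1-\delta^2},$$
so the integrand over the first quadrant equals $\exp\!\big(-\tfrac{x^2-2\delta xy+y^2}{2(1-\delta^2)}\big)$. First I would substitute $x = r\cos\theta$, $y = r\sin\theta$ with $(r,\theta)\in[0,\infty)\times[0,\pi/2]$. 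Since $x^2-2\delta xy+y^2 = r^2(1-\delta\sin2\theta)$ and $dx\,dy = r\,dr\,d\theta$, the radial integral $\int_0^\infty e^{-ar^2}r\,dr = \tfrac{1}{2a}$ collapses the expression to
$$\frac{1}{2\pi\sqrt{1-\delta^2}}\int_0^\infty\int_0^\infty e^{-\frac{x^2-2\delta xy+y^2}{2(1-\delta^2)}}\,dx\,dy = \frac{\sqrt{1-\delta^2}}{2\pi}\int_0^{\pi/2}\frac{d\theta}{1-\delta\sin2\theta}.$$

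Next I would evaluate the remaining integral. After the substitution $u=2\theta$ it becomes $\tfrac12\int_0^\pi\frac{du}{1-\delta\sin u}$, and the Weierstrass substitution $t=\tan(u/2)$ turns this into the rational integral $\int_0^\infty\frac{dt}{(t-\delta)^2+(1-\delta^2)}$. This is a standard arctangent integral, and evaluating it between $0$ and $\infty$ gives
$$\int_0^{\pi/2}\frac{d\theta}{1-\delta\sin2\theta} = \frac{1}{\sqrt{1-\delta^2}}\Big(\frac{\pi}{2} + \arctan\frac{\delta}{\sqrt{1-\delta^2}}\Big).$$
The identity $\arctan\frac{\delta}{\sqrt{1-\delta^2}} = \sin^{-1}(\delta)$, valid for $\delta\in(-1,1)$ with both sides lying in $(-\pi/2,\pi/2)$, then yields the claimed value: multiplying by the prefactor $\frac{\sqrt{1-\delta^2}}{2\pi}$ cancels $\sqrt{1-\delta^2}$ cleanly and leaves $\frac14 + \frac{\sin^{-1}(\delta)}{2\pi}$. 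The "in particular" statement is then immediate, since $\sin^{-1}(\delta)<0$ whenever $\delta<0$.

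The computation is entirely elementary, so there is no serious obstacle; the only points requiring care are the bookkeeping of the Weierstrass substitution — verifying that $u\in[0,\pi]$ corresponds to $t\in[0,\infty)$ and that $1-\delta\sin u>0$ throughout, which holds because $|\delta|<1$ forces $1-\delta\sin u \geq 1-|\delta|>0$ — and the choice of branch in the arctangent identity. As an alternative that sidesteps the explicit evaluation, one could write $F(\delta)$ for the left-hand side, differentiate under the integral sign to check $F'(\delta) = \frac{1}{2\pi\sqrt{1-\delta^2}}$, and match the initial value $F(0)=\frac14$ (where the integrand factorizes into a product of two one-dimensional Gaussian integrals); this identifies $F(\delta)$ with $\frac14+\frac{\sin^{-1}(\delta)}{2\pi}$ directly, since both functions have the same derivative and agree at $\delta=0$.
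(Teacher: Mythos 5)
Your computation is correct, but it follows a genuinely different route from the paper. The paper first completes the square in the exponent, writing the integrand as $e^{-x^2/2}e^{-(y-\delta x)^2/(2(1-\delta^2))}$, substitutes $u=(y-\delta x)/\sqrt{1-\delta^2}$ to obtain a standard rotationally symmetric Gaussian over the wedge $\{x>0,\ u>-\delta x/\sqrt{1-\delta^2}\}$, and then reads off the answer in polar coordinates as the angular measure $\frac{1}{2\pi}\bigl(\frac{\pi}{2}-\sin^{-1}(-\delta)\bigr)$ of that wedge times the trivial radial integral; the arcsine appears geometrically as the angle of the boundary line. You instead pass to polar coordinates on the original quadratic form, collapse the radial integral to get $\frac{\sqrt{1-\delta^2}}{2\pi}\int_0^{\pi/2}\frac{d\theta}{1-\delta\sin 2\theta}$, and evaluate the remaining trigonometric integral by the Weierstrass substitution, recovering $\sin^{-1}(\delta)$ through the identity $\arctan\bigl(\delta/\sqrt{1-\delta^2}\bigr)=\sin^{-1}(\delta)$. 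Both arguments are elementary and complete; the paper's version buys a shorter, more geometric derivation in which the arcsine has a visible meaning, while yours is more mechanical but requires no insight about which linear change of variables to make. Your alternative via differentiation under the integral sign (checking $F'(\delta)=\frac{1}{2\pi\sqrt{1-\delta^2}}$ and $F(0)=\frac14$) is the classical proof of the Gaussian orthant-probability formula and is also valid, though as sketched it leaves the derivative computation unverified.
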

\begin{proof}
    By \eqref{5.eq:SigmaInverse},
    \begin{multline*}
   \frac{1}{2\pi \sqrt{1-\delta^2}} \int_0^\infty\int_0^\infty e^{-\frac{1}{2}(x,y)\Sigma_\delta^{-1}(x,y)^T}\text{d}x\text{d}y=\frac{1}{2\pi \sqrt{1-\delta^2}}\int_0^\infty\int_0^\infty e^{-\frac{x^2-2\delta xy+y^2}{2(1-\delta^2)}}\text{d}x\text{d}y\\
   =\frac{1}{2\pi \sqrt{1-\delta^2}}\int_0^\infty\int_0^\infty e^{-\frac{x^2}{2}}e^{-\frac{(y-\delta x)^2}{2(1-\delta^2)}}\text{d}x\text{d}y
    \end{multline*}
    Thus, substituting $u=\frac{y-\delta x}{\sqrt{1-\delta^2}}$ and then switching to polar coordinates yields, 
    \begin{equation*}
    \frac{1}{2\pi}\int_0^\infty \int_{-\delta x/\sqrt{1-\delta^2}}^\infty e^{-\frac{x^2+u^2}{2}}\text{d}u\text{d}x
         =\frac{1}{2\pi}\int_{\sin^{-1}(-\delta)}^{\pi/2}\int_{0}^\infty e^{-\frac{r^2}{2}}r\text{d}r\text{d}\theta=\frac{1}{4}+\frac{\sin^{-1}(\delta)}{2\pi}.
    \end{equation*}
    We are done.
\end{proof}
\section{Proof of Theorem A}
We will prove the following stronger version of Theorem A which provides examples involving not only IP$_0$ sets but also other additive structures with interesting recurrence properties (e.g. the so called $\Delta_{\ell,r}$ sets introduced in \cite{BerZel_JCTA_iteratedDifferences2021}).
\begin{namedthm*}{Theorem A$^\prime$}\label{3.TheoremA'}
For any $p\in\Z[x]$ with $p(0)=0$ and $\deg(p)>1$, there exist a Gaussian system  $(\R^\Z,\mathcal A,\gamma,T)$, a set $A\in\mathcal A$ with $\gamma(A)=\frac{1} {2}$, and a family of sequences $((n_{t,s})_{s=1}^t)_{t\in\N}$ in $\N$ such that for any $\epsilon>0$, there exists a $t_\epsilon\in\N$ with the property that  for any $t\geq t_\epsilon$, any $s_1,...,s_\ell\in\{1,...,t\}$ with $s_1<\cdots<s_\ell$, and any $\xi_1,...,\xi_\ell\in\{-1,1\}$,
 $$\gamma(A\cap T^{-p(\sum_{j=1}^\ell\xi_jn_{t,s_j})}A)<\frac{1}{6}+\epsilon.$$
\end{namedthm*}
To prove Theorem A$^\prime$ we will need the following lemma the proof of which will be provided in the next section.. For a finite set  $F$, we denote its cardinality  by $|F|$.
\begin{lem}\label{4.SpectralMeasure}
    Let $p\in\Z[x]$ be such that  $p(0)=0$ and $\deg(p)>1$. There exists a family of finite sequences $((m_{t,s})_{s=1}^{t^2})_{t\in\N}$ in $\N$ and a Borel probability measure $\mu$ on $\mathbb T$ with the following property
\begin{equation}\label{3.eq:NegativeCorrelations}
\lim_{t\rightarrow\infty}\left(\max_{F\subseteq \{1,...,t^2\},\,|F|\geq t}\max_{\xi_1,...,\xi_{t^2}\in\{-1,1\}}\left|\int_\mathbb T e^{2\pi ip(\sum_{j\in F}\xi_jm_{t,j})x}\text{d}\mu(x)+\frac{1}{2}\right|\right)=0.
\end{equation}
\end{lem}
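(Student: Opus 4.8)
The plan is to construct the measure $\mu$ so that, along a suitably chosen sequence, the Fourier coefficients of $\mu$ evaluated at the polynomial values $p(\sum_{j\in F}\xi_j m_{t,j})$ are forced to cluster near $-\tfrac12$. The natural strategy is to look for $\mu$ whose Fourier transform $\widehat\mu(n)=\int_{\mathbb T}e^{2\pi i n x}\,\mathrm d\mu(x)$ takes a value close to $-\tfrac12$ along a rich enough set of integers, and then to arrange the $m_{t,s}$ so that every long sub-sum $p(\sum_{j\in F}\xi_j m_{t,j})$ lands in that set. Since $\deg(p)>1$, the key difficulty is that $p$ is nonlinear, so the set of attainable values $p(\sum_{j\in F}\xi_j m_{t,j})$ is not an arithmetic progression or an IP set but a nonlinear image of one; I would therefore aim to produce a lacunary-type or Liouville-type construction in which the relevant polynomial values all share a common divisibility or approximation property that pins down $\widehat\mu$.

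First I would fix the target value. A convenient choice is to build $\mu$ as (a limit of) a measure supported near points where $e^{2\pi i n x}\approx -\tfrac12$ is impossible for a single exponential, so instead I would take $\mu$ to be an average of two point masses or a Riesz-product-type measure whose transform is real and, for a designated family of large integers $N$, satisfies $\widehat\mu(N)\to -\tfrac12$. Concretely, I expect to use a sum like $\mu=\sum_k c_k\,\nu_k$ where each $\nu_k$ contributes a cosine term $\cos(2\pi \theta_k x)$ with frequencies $\theta_k$ chosen so that the resulting $\widehat\mu$ has controlled values; the coefficients $c_k$ and frequencies $\theta_k$ are then tuned to drive the transform toward $-\tfrac12$ on the target integers. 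The precise shape of $\mu$ should be dictated backward from the requirement \eqref{3.eq:NegativeCorrelations}.

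The core combinatorial step is the selection of the $m_{t,s}$. Here I would exploit the flexibility in choosing $t^2$ integers and in the constraint $|F|\ge t$: by taking the $m_{t,s}$ extremely rapidly growing (super-lacunary, in fact tower-type relative to the coefficients of $p$), I can ensure that for any $F$ with $|F|\ge t$ and any choice of signs $\xi_j$, the top-degree term of $p(\sum_{j\in F}\xi_j m_{t,j})$ dominates and the whole value is, up to a negligible additive error, determined by the largest index in $F$. This reduces the doubly-indexed maximum over $(F,\xi)$ to control over a manageable family of integers $N_{t,F,\xi}$, all of which I would arrange to be approximately of the form demanded by my construction of $\mu$. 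The expansion of $p(\sum_{j\in F}\xi_j m_{t,j})$ via the multinomial/Taylor formula, together with the growth of the $m_{t,s}$, is the routine but delicate calculation that makes this work.

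The main obstacle I anticipate is precisely the interaction between the nonlinearity of $p$ and the sign and subset freedom: because $p$ has degree greater than one, the cross terms in $p(\sum_{j\in F}\xi_j m_{t,j})$ couple different indices, and a single rigid frequency in $\mu$ will not simultaneously produce the value $-\tfrac12$ for all admissible $(F,\xi)$. Overcoming this is where the construction must be most careful: I would either spread the mass of $\mu$ across a Cantor-like set of frequencies matched to the arithmetic of the $m_{t,s}$, so that each $N_{t,F,\xi}$ sees the right approximate phase, or pass to the limit $t\to\infty$ and only demand the approximation asymptotically, absorbing the cross-term discrepancies into the error that \eqref{3.eq:NegativeCorrelations} allows to vanish. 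I expect the final measure $\mu$ to be obtained as a weak-$*$ limit of finitely-supported approximants, with a diagonal argument reconciling the requirements coming from each $t$, and the hardest estimate being the uniform (in $F$ and $\xi$) control of the phases $p(\sum_{j\in F}\xi_j m_{t,j})\bmod 1$ against the chosen frequencies.
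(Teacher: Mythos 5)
There is a genuine gap, and in fact a step that fails. Your central reduction---choose the $m_{t,s}$ so rapidly growing that ``the whole value $p(\sum_{j\in F}\xi_j m_{t,j})$ is, up to a negligible additive error, determined by the largest index in $F$''---is only true in the archimedean sense, and that is the wrong sense for this problem. What controls $\int_{\mathbb T}e^{2\pi i p(n)x}\,\mathrm{d}\mu(x)$ is $p(n)x \bmod 1$ on the support of $\mu$, and the cross terms in the multinomial expansion of $(\sum_{j\in F}\xi_j m_{t,j})^d$ (terms like $d\,\xi_{s}\xi_{s'}^{d-1}m_{t,s}m_{t,s'}^{d-1}$) are enormous integers; they cannot be absorbed as a small error modulo $1$ unless the support of $\mu$ is built precisely around them. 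Worse, discarding them destroys the very dependence on $|F|$ that the statement requires: if the phase really were governed by the largest index alone, the quantity $\widehat\mu(p(\sum_{j\in F}\xi_jm_{t,j}))$ could not converge to a single value uniformly over \emph{all} $F$ with $|F|\ge t$ and all sign patterns, and you would have no mechanism forcing the specific limit $-\tfrac12$ (recall $\widehat\mu$ is positive definite, so pinning it near a negative constant on such a rich family is exactly the delicate point). Your proposal never supplies a reason why $-\tfrac12$, as opposed to $0$ or some other value, should emerge.

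The paper's proof works because it embraces, rather than suppresses, those cross terms. The measure $\mu$ is the pushforward of a product measure on a Cantor set $\mathcal C=\prod_{t}\prod_{s}\{0,1,2\}$ under a map $f$ whose frequencies are $\tfrac{1}{2bm_{t,s}^{d}}$ (diagonal) and $\tfrac{1}{2bdm_{t,s}m_{t,s'}^{d-1}}$ (off-diagonal), matched exactly to the diagonal and cross terms of $p(\sum_{j\in F}\xi_jm_{t,j})$. Divisibility conditions kill all contributions from levels $t'<t$ and growth conditions kill levels $t'>t$, so that $p(n)f(\omega)\bmod 1$ is within $o(1)$ of $\tfrac12\bigl(A_{t,F}(\omega)+B_{t,F}(\omega)+A_{t,F}(\omega)B_{t,F}(\omega)\bigr)$, where $A_{t,F}$ and $B_{t,F}$ count the coordinates of $\omega$ in $F$ equal to $0$ and $1$ respectively. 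The value $-\tfrac12$ then comes from a Markov-chain computation showing $\mathbb E\bigl(e^{i\pi(A_N+B_N+A_NB_N)}\bigr)\to \tfrac14-\tfrac14-\tfrac14-\tfrac14=-\tfrac12$ as $N\to\infty$; this is why the hypothesis $|F|\ge t$ is needed. Your closing suggestion to ``spread the mass of $\mu$ across a Cantor-like set of frequencies matched to the arithmetic of the $m_{t,s}$'' points in the right direction, but without the product-of-counts term $A_{t,F}B_{t,F}$ in the phase---which is generated precisely by the cross terms you propose to discard---the argument cannot reach $-\tfrac12$.
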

\begin{proof}[Proof of Theorem A$^\prime$]
    Let $\mu$ and $((m_{t,s})_{s=1}^{t^2})_{t\in\N}$ be as in in the statement of \cref{4.SpectralMeasure} and denote by $\rho$ the unique probability measure with the property that 
    $$\int_\mathbb T e^{2\pi inx}\text{d}\rho(x)=\frac{1}{2}\left(\int_\mathbb T e^{2\pi inx}\text{d}\mu(x)+\int_\mathbb T e^{-2\pi inx}\text{d}\mu(x)\right)$$
    for each $n\in\Z$. Observe that $\rho$ is a Borel probability measure on $\mathbb T$ and for each $n\in\Z$,
    $$\int_\mathbb T e^{2\pi inx}\text{d}\rho(x)=\int_\mathbb T e^{-2\pi inx}\text{d}\rho(x).$$
    Let  $(\R^\Z,\mathcal A,\gamma, T)$ be the Gaussian system associated with $\rho$. For each $t\in\N$ and each $s\in\{1,...,t\}$, let 
    $$n_{t,s}=\sum_{j=1}^t m_{t,(s-1)t+j}.$$
    Note that, by \eqref{3.eq:NegativeCorrelations}, for any $r>0$ there exists a $t_r\in\N$ such that for any $t\geq t_r$,
    \begin{multline}\label{5.CloseTo-0.5}
    \max_{F\subseteq \{1,...,t\},\,F\neq\emptyset}\max_{\xi_1,...,\xi_{t}\in\{-1,1\}}\left|\int_\mathbb T e^{2\pi ip(\sum_{j\in F}\xi_jn_{t,j})x}\text{d}\rho(x)+\frac{1}{2}\right|\\
    \leq\max_{F\subseteq \{1,...,t\},\,F\neq\emptyset}\max_{\xi_1,...,\xi_{t}\in\{-1,1\}}\hfill\\
    \frac{1}{2}\left(\left|\int_\mathbb T e^{2\pi ip(\sum_{j\in F}\xi_jn_{t,j})x}\text{d}\mu(x)+\frac{1}{2}\right|+\left|\int_\mathbb T e^{-2\pi ip(\sum_{j\in F}\xi_jn_{t,j})x}\text{d}\mu(x)+\frac{1}{2}\right|\right)\\
    =\max_{F\subseteq \{1,...,t\},\,F\neq\emptyset}\max_{\xi_1,...,\xi_{t}\in\{-1,1\}}\hfill\\
    \frac{1}{2}\left(\left|\int_\mathbb T e^{2\pi ip(\sum_{j\in F}\xi_jn_{t,j})x}\text{d}\mu(x)+\frac{1}{2}\right|+\left|\overline{\int_\mathbb T e^{2\pi ip(\sum_{j\in F}\xi_jn_{t,j})x}\text{d}\mu(x)}+\frac{1}{2}\right|\right)\\
    =\max_{F\subseteq \{1,...,t\},\,F\neq\emptyset}\max_{\xi_1,...,\xi_{t}\in\{-1,1\}}\left|\int_\mathbb T e^{2\pi ip(\sum_{j\in F}\xi_jn_{t,j})x}\text{d}\mu(x)+\frac{1}{2}\right|
    \leq r.
    \end{multline}
    Also, by \cref{3.Lem:GaussianIntegral}, 
    \begin{equation}\label{5.LimitOfmeasures}
    \lim_{\delta\rightarrow-\frac{1}{2}}\frac{1}{2\pi \sqrt{1-\delta^2}}\int_0^\infty\int_0^\infty e^{-\frac{1}{2}(x,y)\Sigma_\delta^{-1}(x,y)^T}\text{d}x\text{d}y=\lim_{\delta\rightarrow-\frac{1}{2}}\left(\frac{1}{4}+\frac{\sin^{-1}(\delta)}{2\pi}\right)=\frac{1}{4}-\frac{1}{12}=\frac{1}{6}.
    \end{equation}
    Set $A=X_0^{-1}(0,\infty)$. Combining (G.3), \eqref{5.CloseTo-0.5}, and \eqref{5.LimitOfmeasures}, we see that for any $\epsilon>0$, there exists a $t_\epsilon\in\N$ such that for any $t\geq t_\epsilon$, any $s_1,...,s_\ell\in\{1,...,t\}$ with $s_1<\cdots<s_\ell$, and any $\xi_1,...,\xi_\ell\in\{-1,1\}$,
    \begin{multline*}
    |\gamma(A\cap T^{-p(\sum_{j=1}^\ell\xi_jn_{t,s_j})}A)-\frac{1}{6}|=|\gamma\left(X_0^{-1}(0,\infty)\cap X_{p(\sum_{j=1}^\ell\xi_jn_{t,s_j})}^{-1}(0,\infty)\right)-\frac{1}{6}|\\
    =|\frac{1}{2\pi \sqrt{1-\delta^2}}\int_0^\infty\int_0^\infty e^{-\frac{1}{2}(x,y)\Sigma_\delta^{-1}(x,y)^T}\text{d}x\text{d}y-\frac{1}{6}|<\epsilon,
    \end{multline*}
    where
    $$\delta=\int_\mathbb T e^{2\pi ip(\sum_{j=1}^\ell\xi_jn_{t,s_j})x}\text{d}\rho(x).$$
    Thus, 
    $$\gamma(A\cap T^{-p(\sum_{j=1}^\ell\xi_jn_{t,s_j})}A)<\frac{1}{6}+\epsilon.$$
    Theorem A$^\prime$ now follows by noting that, by \eqref{GaussianMarginal}, $\gamma(A)=\frac{1}{2}$. 
\end{proof}
\section{Proof of \cref{4.SpectralMeasure}}
To prove \cref{4.SpectralMeasure}, we utilize a strategy similar to the one used in \cite[Section 3]{zelada2022GaussianETDS}. This strategy consists in constructing  a measurable map $f$ from a Cantor set $\mathcal C$ to $\mathbb T$ and a probability measure $\mathbb P$ on $\mathcal C$ such that the measure 
$$\mu=\mathbb P\circ f^{-1}$$
is the measure $\mu$ described in \cref{4.SpectralMeasure}. In Subsection 4.1 we will establish various  estimates that will be needed  for the construction of $f$. In Subsection 4.2 we will define convenient random variables that will be later used for the construction of $\mathbb P$. In Subsection 4.3 we will prove  \cref{4.SpectralMeasure}.
\subsection{Some relevant estimates}
Through this subsection we will fix $b,d\in\N$, $d>1$, and let $((m_{t,s})_{s=1}^{t^2})_{t\in\N}$ be a family of finite sequences in $\N$ with the following properties:
\begin{enumerate}
\item [(C.1)] For any $t\in\N$, $m_{t,1}>t^{2d+4}2^{t+1}2^d$.
\item [(C.2)] For any $t\in\N$ and any $s\in\{1,...,t^2\}\setminus\{t^2\}$, 
$$(2bdm_{t,s}^{d+1})|m_{t,(s+1)}.$$ 
\item [(C.3)] For any $t\in\N$,
$$(2bdm_{t,t^2}^{d+1})|m_{(t+1),1}.$$ 
\item [(C.4)] For any $t\in\N$ and any $s,s'\in\{1,...,t^2\}$ with $s<s'$, 
$$\frac{m^d_{t,s}}{m_{t,s'}}<\frac{1}{t^{2d+4}2^{t+1}2^d}.$$
\item [(C.5)] For any $\tau,t\in \N$ with $\tau<t$, any $s\in\{1,...,\tau^2\}$, and any $s'\in\{1,...,t^2\}$,
$$\frac{m^d_{\tau,s}}{m_{t,s'}}<\frac{1}{t^{2d+4}2^{t+1}2^d}.$$
\end{enumerate}
\begin{rem}
    One can construct a sequence  of sequences $((m_{t,s})_{s=1}^{t^2})_{t\in\N}$ satisfying conditions  (C.1)-(C.5) as follows: First, pick a (strictly) increasing sequence  $(m_k)_{k\in\N}$ in $\N$  with the property that for each $k\in\N$, 
    $$(2bdm_k^{d+1})|m_{k+1}$$
    and observe that any subsequence of $(m_k)_{k\in\N}$  inherits this divisibility property. Next, let
     $(m'_{k})_{k\in\N}$ be a subsequence of $(m_k)_{k\in\N}$ with the additional properties that (a) for each $k\in\N$, 
    $$\frac{(m_k')^d}{m'_{k+1}}<\frac{1}{(k+1)^{2d+4}2^{k+2}2^d}$$
    and (b) for each $t\in\N$,
    $$m'_{\sigma_t+1}>t^{2d+4}2^{t+1}2^d,$$
    where, for each $t\in\N$, $\sigma_t=\sum_{r=0}^{t-1}r^2$. 
    Letting $m_{t,s}=m'_{\sigma_t+s}$
    for each $t\in\N$ and each $s\in\{1,...,t^2\}$, we see that 
     $((m_{t,s})_{s=1}^{t^2})_{t\in\N}$ satisfies (C.1)-(C.5).
\end{rem}
Our first estimate is an easy consequence of the rate of growth of $((m_{t,s})_{s=1}^{t^2})_{t\in\N}$.
\begin{lem}\label{2.lem:BoundOftheFuture}
    For any $\tau\in\N$,    $$\sum_{t=\tau+1}^\infty\left(\sum_{s=1}^{t^2}\frac{1}{2m_{t,s}^d}+\sum_{s'=2}^{t^2}\sum_{s=1}^{s'-1}\frac{1}{2dm_{t,s}m_{t,s'}^{d-1}}\right)<\frac{1}{\tau^{2d}m_{\tau,\tau^2}^d2^\tau}$$
\end{lem}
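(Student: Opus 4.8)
The plan is to exploit the extremely fast growth of the family $((m_{t,s})_{s=1}^{t^2})_{t\in\N}$, which is encoded in conditions (C.2) and (C.5), and to reduce the whole expression to a geometric tail in $t$. First I would isolate the only two structural facts that are needed. From (C.2) the finite sequence $(m_{t,s})_{s=1}^{t^2}$ is strictly increasing in $s$, so $m_{t,s}\ge m_{t,1}$ for every $s$; and since each $m_{t,s}\ge 1$ and $d\ge 2$, we have $m_{t,s}^{-d}\le m_{t,s}^{-1}$. The decisive input is (C.5): taking the index $s$ there to be $\tau^2$ and letting $t>\tau$ while $s'$ ranges over $\{1,\dots,t^2\}$ gives
\[
\frac{1}{m_{t,s'}}<\frac{1}{m_{\tau,\tau^2}^{d}\,t^{2d+4}2^{t+1}2^d}
\]
for every admissible $s'$. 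This single estimate already introduces the target factor $m_{\tau,\tau^2}^{-d}$ together with a large surplus factor $t^{-(2d+4)}2^{-(t+1)}2^{-d}$.

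Next I would bound the two inner sums for a fixed $t>\tau$. For the first sum, each of its $t^2$ terms satisfies $\tfrac12 m_{t,s}^{-d}\le\tfrac12 m_{t,s}^{-1}$, so by the displayed estimate the sum is at most $\frac{t^2}{2}\cdot\frac{1}{m_{\tau,\tau^2}^{d}t^{2d+4}2^{t+1}2^d}$. For the second sum I would use $m_{t,s}m_{t,s'}^{d-1}\ge m_{t,1}^{d}\ge m_{t,1}$ together with the same estimate; since there are fewer than $t^4/2$ pairs $(s,s')$ with $s<s'$, this sum is at most $\frac{t^4}{4d}\cdot\frac{1}{m_{\tau,\tau^2}^{d}t^{2d+4}2^{t+1}2^d}$. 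Because $d\ge 2$, the ratios $t^2/t^{2d+4}=t^{-(2d+2)}$ and $t^4/t^{2d+4}=t^{-2d}$ both decay at least like $t^{-2d}$, and collecting numerical constants (using $2^d\ge 4$) yields a clean per-$t$ bound
\[
S_t<\frac{1}{m_{\tau,\tau^2}^{d}\,t^{2d}\,2^{t}},
\]
where $S_t$ denotes the bracketed quantity for that value of $t$.

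Finally I would sum over $t\ge\tau+1$. For such $t$ one has $t^{-2d}\le\tau^{-2d}$, so pulling this factor out leaves the geometric tail $\sum_{t=\tau+1}^\infty 2^{-t}=2^{-\tau}$, giving exactly
\[
\sum_{t=\tau+1}^\infty S_t<\frac{1}{\tau^{2d}m_{\tau,\tau^2}^{d}2^{\tau}},
\]
which is the claim. The strictness of the final inequality is inherited from the strict inequality in (C.5).

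I do not expect a genuine obstacle here: the lemma is essentially an exercise in careful bookkeeping. The one point requiring attention is ensuring that the polynomial factors $t^2$ and $t^4$ arising from the number of summands are absorbed by the surplus $t^{-(2d+4)}$ supplied by (C.5) — this is precisely the role for which the exponents in conditions (C.1)–(C.5) were calibrated. One should also verify that the numerical constants (the factors $2$, $4d$, and $2^d$) combine to something $\le 1$, so that the final constant is exactly $1$ rather than something larger; this becomes automatic once $d\ge 2$ and $2^d\ge 4$ are invoked.
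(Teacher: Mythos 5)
Your proof is correct and follows essentially the same route as the paper: bound the bracketed quantity for each fixed $t$ by counting its at most $t^2+t^4/2$ terms, each controlled via $m_{t,s}\ge m_{t,1}$ and $m_{t,s}^{-d}\le m_{t,s}^{-1}$, then invoke (C.5) with $s=\tau^2$ to extract the factor $m_{\tau,\tau^2}^{-d}t^{-(2d+4)}2^{-t}$, and finish with $t^{-2d}<\tau^{-2d}$ and the geometric tail $\sum_{t>\tau}2^{-t}=2^{-\tau}$. The paper merely packages the per-$t$ bound as $t^4/m_{t,1}^d$ before applying (C.5), which is only a cosmetic difference from your term-by-term version.
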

\begin{proof}
    By condition (C.2) we have that for any $t\in\N$, 
    $$m_{t,1}\leq \cdots\leq m_{t,t^2}.$$
    So, for each $t>1$,
    $$\sum_{s=1}^{t^2}\frac{1}{2m_{t,s}^d}+\sum_{s'=2}^{t^2}\sum_{s=1}^{s'-1}\frac{1}{2dm_{t,s}m_{t,s'}^{d-1}}<\sum_{s=1}^{t^2}\frac{1}{m_{t,1}^d}+\sum_{s'=2}^{t^2}\sum_{s=1}^{s'-1}\frac{2}{m_{t,1}^{d}}=\frac{t^4}{m_{t,1}^d}.$$
    It now follows from condition (C.5) that 
    \begin{multline*}
    \sum_{t=\tau+1}^\infty\left(\sum_{s=1}^{t^2}\frac{1}{2m_{t,s}^d}+\sum_{s'=2}^{t^2}\sum_{s=1}^{s'-1}\frac{1}{2dm_{t,s}m_{t,s'}^{d-1}}\right)<\sum_{t=\tau+1}^\infty \frac{t^4}{m_{t,1}^d}\leq\sum_{t=\tau+1}^\infty \frac{t^4}{m_{t,1}}\\
    <\sum_{t=\tau+1}^\infty\frac{t^4}{t^{2d+4}2^tm_{\tau,\tau^2}^d}<\frac{1}{\tau^{2d}m_{\tau,\tau^2}^d}\sum_{t=\tau+1}^\infty\frac{1}{2^t}=\frac{1}{\tau^{2d}m_{\tau,\tau^2}^d2^\tau}
    \end{multline*}
\end{proof}
The next two results follow from the divisibility properties of $((m_{t,s})_{s=1}^{t^2})_{t\in\N}$.
\begin{lem}\label{2.lem:PastPiecesAreIntegers}
    For any $\tau,t\in\N$ with $\tau>t$, any $s\in\{1,...,\tau^2\}$, and any $s',s''\in\{1,...,t^2\}$ with $s'\leq s''$,
    $$\frac{m_{\tau,s}}{2bdm_{t,s'}m_{t,s''}^{d-1}}\in\Z$$
\end{lem}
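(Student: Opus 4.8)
The plan is to reduce the statement to a single transitive divisibility fact about the doubly-indexed family, after which the assertion is read off from the inequalities $s'\le s''$ and $\tau>t$. The essential observation is that conditions (C.2) and (C.3) together say the following: equip the set of pairs $(t,s)$ with the lexicographic order, declaring $(t_1,s_1)$ to precede $(t_2,s_2)$ iff $t_1<t_2$, or $t_1=t_2$ and $s_1<s_2$. Then every term is divisible by $2bd$ times the $(d+1)$-st power of its immediate lexicographic predecessor: within a fixed block of index $t$ this is exactly (C.2), and at the passage from the block of index $t$ to that of index $t+1$ (i.e. from $(t,t^2)$ to $(t+1,1)$) this is exactly (C.3).

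First I would prove the following claim: whenever $(t_1,s_1)$ strictly precedes $(t_2,s_2)$ in this order, one has $2bd\,m_{t_1,s_1}^{d+1}\mid m_{t_2,s_2}$. Writing $(t_1,s_1)^{+}$ for the lexicographic successor, (C.2)/(C.3) give $2bd\,m_{t_1,s_1}^{d+1}\mid m_{(t_1,s_1)^{+}}$. Moreover, since $d>1$, those same conditions imply in particular that each term divides its successor, so by transitivity $m_{(t_1,s_1)^{+}}\mid m_{t_2,s_2}$ whenever $(t_2,s_2)$ does not precede $(t_1,s_1)^{+}$. Chaining these two divisibilities yields the claim. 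Taking the weaker consequence, I record that $m_{t_1,s_1}\mid m_{t_2,s_2}$ whenever $(t_1,s_1)$ precedes or equals $(t_2,s_2)$.

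Now I would apply the claim to the indices at hand. Since $\tau>t$, the pair $(t,s'')$ strictly precedes $(\tau,s)$, regardless of the values of $s\in\{1,\dots,\tau^2\}$ and $s''\in\{1,\dots,t^2\}$; hence the claim supplies $2bd\,m_{t,s''}^{d+1}\mid m_{\tau,s}$. On the other hand, since $s'\le s''$, the weaker consequence gives $m_{t,s'}\mid m_{t,s''}$, whence $m_{t,s'}\,m_{t,s''}^{d-1}\mid m_{t,s''}\cdot m_{t,s''}^{d-1}=m_{t,s''}^{d}\mid m_{t,s''}^{d+1}$. Multiplying by the common factor $2bd$ and combining with the previous divisibility yields $2bd\,m_{t,s'}\,m_{t,s''}^{d-1}\mid 2bd\,m_{t,s''}^{d+1}\mid m_{\tau,s}$, which is precisely the assertion that the displayed quotient lies in $\Z$.

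The argument is essentially bookkeeping, and I expect no serious obstacle. The one point to handle with care is the role of the \emph{strict} hypothesis $\tau>t$: it is what guarantees $(t,s'')\prec(\tau,s)$ and therefore lets us spend one full predecessor power $m_{t,s''}^{d+1}$, which is exactly enough to absorb both the factor $m_{t,s'}$ (via $s'\le s''$) and the factor $m_{t,s''}^{d-1}$. The only other mild subtlety is the clean treatment of the block transition, namely verifying that (C.3) supplies the required predecessor relation precisely when $(t_1,s_1)=(t',(t')^2)$ is succeeded by $(t'+1,1)$; this is immediate from the statement of (C.3).
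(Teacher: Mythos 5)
Your proof is correct and follows the same route as the paper, which simply states that the lemma "follows immediately from conditions (C.2) and (C.3)"; your lexicographic-chaining argument is exactly the bookkeeping that remark leaves implicit.
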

\begin{proof}
    This result follows immediately from conditions (C.2) and (C.3).
\end{proof}
\begin{lem}\label{2.lem:PastInSamePiece}
    For any $t\in\N$ and any $s,s',s''\in\{1,...,t^2\}$ with $ s'\leq s''<s,$
    $$\frac{m_{t,s}}{2bdm_{t,s'}m_{t,s''}^{d-1}}\in\Z.$$
\end{lem}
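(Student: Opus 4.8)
The goal is to show that for any $t\in\N$ and any $s,s',s''\in\{1,\dots,t^2\}$ with $s'\leq s''<s$, the quantity
$$\frac{m_{t,s}}{2bdm_{t,s'}m_{t,s''}^{d-1}}$$
is an integer. The plan is to exploit the chained divisibility encoded in condition (C.2), which guarantees that consecutive terms of the sequence are divisible by high powers of their predecessors. Since all three indices $s',s''$ lie strictly below $s$ within the \emph{same} block $t$, and the largest of them is $s''\leq s-1$, the key observation is that $m_{t,s''}$ divides $m_{t,s}$ (in fact a large power of $m_{t,s''}$ does), and similarly for $m_{t,s'}$.

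First I would reduce everything to divisibility by the single largest denominator index. Because condition (C.2) states that $(2bd\,m_{t,s}^{d+1})\mid m_{t,(s+1)}$ for each $s$ in the range, iterating this divisibility up the chain from index $s''$ to index $s$ shows that $2bd\,m_{t,s''}^{d+1}$ divides $m_{t,s''+1}$, which in turn divides $m_{t,s''+2}$, and so on, so that $2bd\,m_{t,s''}^{d+1}$ divides $m_{t,s}$ (using $s''<s$, so the chain has at least one step). The crucial point is that the exponent $d+1$ appearing in (C.2) is large enough to supply all the factors we need in the denominator.

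Next I would check that the denominator $2bd\,m_{t,s'}m_{t,s''}^{d-1}$ actually divides $2bd\,m_{t,s''}^{d+1}$, which by the previous paragraph divides $m_{t,s}$. Since $s'\leq s''$, condition (C.2) (applied along the chain from $s'$ up to $s''$, or trivially if $s'=s''$) gives that $m_{t,s'}$ divides $m_{t,s''}$; hence $m_{t,s'}\,m_{t,s''}^{d-1}$ divides $m_{t,s''}\cdot m_{t,s''}^{d-1}=m_{t,s''}^{d}$, which divides $m_{t,s''}^{d+1}$. Multiplying through by the common factor $2bd$, we conclude that $2bd\,m_{t,s'}m_{t,s''}^{d-1}$ divides $2bd\,m_{t,s''}^{d+1}$, and the latter divides $m_{t,s}$. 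Therefore the ratio is an integer, as claimed.

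The main obstacle, such as it is, lies in bookkeeping the exponents correctly: one must verify that the power $d-1$ on $m_{t,s''}$ together with one factor of $m_{t,s'}$ never exceeds the power $d+1$ guaranteed by (C.2), and that the constant factor $2bd$ matches. This is exactly why condition (C.2) is stated with the exponent $d+1$ rather than $d$; the extra headroom absorbs the single factor $m_{t,s'}$ (bounded by $m_{t,s''}$) beyond the $m_{t,s''}^{d-1}$ term. Since this is a purely elementary divisibility argument with no analytic input, no genuine difficulty arises once the chain of (C.2) is unwound.
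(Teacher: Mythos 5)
Your proof is correct and follows the same route as the paper, which simply states that the lemma is immediate from condition (C.2); you have merely unwound the divisibility chain $2bd\,m_{t,s''}^{d+1}\mid m_{t,s''+1}\mid\cdots\mid m_{t,s}$ and the containment $m_{t,s'}m_{t,s''}^{d-1}\mid m_{t,s''}^{d+1}$ explicitly. The bookkeeping of exponents is accurate, so nothing further is needed.
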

\begin{proof}
    This result follows immediately form condition (C.2).
\end{proof}
The last two results in this subsection (see \cref{2.lem:OffDiagonalValuesPresent} and \cref{2.lem:DiagonalValuesPresent} below) are the key for the proof of \cref{4.SpectralMeasure}. For any $r\in \R$, we denote the distance from $r$ to the closest integer by $\|r\|$ (i.e. $\|r\|=\inf_{n\in\Z}|r-n|$). We remark that $\|r\|\leq |r|$.
\begin{lem}\label{2.lem:OffDiagonalValuesPresent}
    Let $t\in\N$ with $t>1$,  $c\in\{1,...,d\}$, $\ell\in\{1,...,t^2\}$, $\xi_{1},...,\xi_{t^2}\in\{-1,1\}$, and $s,s',s_1,...,s_\ell\in\{1,...,t^2\}$ with $s<s'$ and, when $\ell>1$,  $s_1<\cdots<s_\ell$ . Then, 
    \begin{equation}\label{2.eq:CloseTo1/2OffDiagonal}
    \left\|\frac{(\sum_{j=1}^\ell\xi_{s_j} m_{t,s_j})^c}{2bdm_{t,s}m_{t,s'}^{d-1}}-\frac{\xi_{s}\xi_{s'}^{d-1}}{2b}\right\|<\frac{1}{t^42^{t+1}}
    \end{equation}
    whenever the conditions (i) $\ell>1$, (ii) there exist distinct  $i,j\in\{1,...,\ell\}$ with $s_i=s$ and $s_j=s'$, and (iii) $c=d$ hold and 
    \begin{equation}\label{2.eq:CloseTo0OffDiagonal}
    \left\|\frac{(\sum_{j=1}^\ell\xi_{s_j} m_{t,s_j})^c}{2bdm_{t,s}m_{t,s'}^{d-1}}\right\|<\frac{1}{t^42^{t+1}}
    \end{equation}
    otherwise.
\end{lem}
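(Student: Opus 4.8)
The plan is to set $N:=\sum_{j=1}^\ell \xi_{s_j}m_{t,s_j}$, expand $N^c$ by the multinomial theorem, divide by $D:=2bdm_{t,s}m_{t,s'}^{d-1}$, and sort the resulting terms into three disjoint groups according to their behaviour modulo $\Z$. It is cleanest to write the expansion as a sum over ordered tuples,
$$N^c=\sum_{(j_1,\dots,j_c)\in\{1,\dots,\ell\}^c}\ \prod_{k=1}^c \xi_{s_{j_k}}m_{t,s_{j_k}},$$
so that the number of summands is exactly $\ell^c\le (t^2)^d=t^{2d}$ and each summand, after division by $D$, equals $\pm P/D$ for a monomial $P=\prod_{k=1}^c m_{t,s_{j_k}}$. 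For each such $P$ I would record the exponent $e$ of $m_{t,s'}$ in $P$ and the largest index occurring in $P$, and use these two quantities to decide whether $\pm P/D$ is an integer, the ``main'' term, or negligibly small.

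The integer terms are those I would dispose of using only the divisibility hypotheses. By (C.2), $2bdm_{t,s}\mid m_{t,w}$ whenever $w>s$, and \cref{2.lem:PastInSamePiece} gives $m_{t,u}/D\in\Z$ whenever $u>s'$. Consequently $P/D\in\Z$ in each of the following situations: $P$ contains a factor $m_{t,u}$ with $u>s'$; $P=m_{t,s'}^{d}$ (forcing $c=d$), since then $P/D=m_{t,s'}/(2bdm_{t,s})\in\Z$; and $P=m_{t,s'}^{d-1}m_{t,w}$ with $s<w<s'$, since then $P/D=m_{t,w}/(2bdm_{t,s})\in\Z$. The sum of all these terms is an integer and therefore invisible to $\|\cdot\|$.

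Next I would isolate the \emph{main} monomial $P_0=m_{t,s}m_{t,s'}^{d-1}$. It can occur in the expansion only when $c=d$ and both indices $s,s'$ appear among $s_1,\dots,s_\ell$, i.e.\ precisely under hypotheses (i)--(iii); when it does, the $\binom{d}{1,d-1}=d$ ordered tuples realizing it contribute in total $d\cdot \xi_s\xi_{s'}^{d-1}m_{t,s}m_{t,s'}^{d-1}/D=\xi_s\xi_{s'}^{d-1}/(2b)$, the quantity subtracted in \eqref{2.eq:CloseTo1/2OffDiagonal}. Every remaining monomial $P$ (neither divisible by $D$ nor equal to $P_0$) has all its indices $\le s'$ and satisfies $e\le d-1$; organizing by the value of $e$, and when $e=d-1$ by the single leftover factor, I would bound $P/D$ using (C.1) and (C.4). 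Indeed, when $e\le d-2$ one has $d-1-e\ge 1$ and hence $P/D\le m_{t,v}^{d}/m_{t,s'}$ for the top index $v<s'$ of $P$ (or $P/D\le 1/m_{t,s}$ if $P$ is a pure power of $m_{t,s'}$), while when $e=d-1$ one gets either $1/(2bdm_{t,s})$ or $m_{t,w}/(2bdm_{t,s})$ with $w<s$. In all cases (C.1) and (C.4) yield $P/D<\tfrac{1}{t^{2d+4}2^{t+1}2^d}$.

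Finally I would assemble the estimate. Since there are at most $t^{2d}$ summands and each small one is $<\tfrac{1}{t^{2d+4}2^{t+1}2^d}$, the small terms total less than $t^{2d}\cdot \tfrac{1}{t^{2d+4}2^{t+1}2^d}=\tfrac{1}{t^{4}2^{t+1}2^d}<\tfrac{1}{t^4 2^{t+1}}$. Writing $N^c/D$ as the main term (when (i)--(iii) hold) plus an integer plus this small remainder, and using that $\|x+n\|=\|x\|$ for $n\in\Z$ together with $\|x\|\le|x|$, yields \eqref{2.eq:CloseTo1/2OffDiagonal} under (i)--(iii) and \eqref{2.eq:CloseTo0OffDiagonal} otherwise (in the latter case $P_0$ is absent, so the main term does not appear). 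The main obstacle is the bookkeeping of the third paragraph: one must verify that \emph{every} monomial that is neither divisible by $D$ nor equal to $P_0$ genuinely falls into a ``small'' case, which is exactly where the precise exponent $2d+4$ and the factor $2^d$ in (C.1) and (C.4) are needed so that the crude count $\ell^c\le t^{2d}$ can be absorbed.
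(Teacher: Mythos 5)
Your proof is correct. It rests on exactly the same three pillars as the paper's argument: the divisibility facts coming from (C.2) and \cref{2.lem:PastInSamePiece}, which make every monomial containing an index above $s'$ (as well as $m_{t,s'}^{d}$ and $m_{t,s'}^{d-1}m_{t,w}$ with $s<w<s'$) an integer after division by $D$; the identification of the $d$ tuples realizing $m_{t,s}m_{t,s'}^{d-1}$ as the sole source of the shift $\xi_s\xi_{s'}^{d-1}/(2b)$; and (C.1), (C.4) to control everything else. What differs is only the bookkeeping. The paper first strips off the indices $>s'$ one at a time by an iterated binomial reduction mod $1$ (reducing to $s_\ell\le s'$), then expands $\bigl(\sum_{j<\ell}\xi_{s_j}m_{t,s_j}+\xi_{s'}m_{t,s'}\bigr)^d$ binomially and treats the critical $m=d-1$ term by a further mod-$1$ reduction to $\sum_{s_j\le s}\xi_{s_j}\xi_{s'}^{d-1}m_{t,s_j}/(2bm_{t,s})$; you instead do a single flat multinomial expansion over ordered tuples and classify each monomial by its index structure. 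Your cruder term count $\ell^c\le t^{2d}$ is precisely what the exponent $2d+4$ in (C.1) and (C.4) is built to absorb (the paper's nested expansion only ever needs $\sum_m\binom{d}{m}=2^d$ terms at a time, but it draws on the same constants), so the final bound $1/(t^42^{t+1})$ comes out identically; a modest advantage of your organization is that it makes immediately visible why the main term appears exactly when conditions (i)--(iii) hold, namely because the monomial $m_{t,s}m_{t,s'}^{d-1}$ can occur only when $c=d$ and both $s$ and $s'$ lie in $\{s_1,\dots,s_\ell\}$.
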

\begin{proof}
    If $s_\ell>s'$, \cref{2.lem:PastInSamePiece} implies that 
    \begin{equation}\label{2.IterativeSimplificationI}
    \frac{(\sum_{j=1}^\ell\xi_{s_j} m_{t,s_j})^c}{2bdm_{t,s}m_{t,s'}^{d-1}}\equiv\frac{\sum_{m=0}^c\binom{c}{m}(\sum_{j=1}^{\ell-1} \xi_{s_j}m_{t,s_j})^{c-m}(\xi_{s_\ell}m_{t,s_\ell})^m}{2bdm_{t,s}m_{t,s'}^{d-1}}\equiv \frac{(\sum_{j=1}^{\ell-1} \xi_{s_j}m_{t,s_j})^c}{2bdm_{t,s}m_{t,s'}^{d-1}}\mod 1,
    \end{equation}
    when $\ell>1$ and 
    \begin{equation}\label{2.IterativeSimplification2}
    \frac{(\sum_{j=1}^\ell \xi_{s_j}m_{t,s_j})^c}{2bdm_{t,s}m_{t,s'}^{d-1}}\equiv \frac{ (\xi_{s_{\ell}}m_{t,s_\ell})^c}{2bdm_{t,s}m_{t,s'}^{d-1}}\equiv 0\mod 1
    \end{equation}
    when $\ell=1$. By iterating \eqref{2.IterativeSimplificationI} and then applying \eqref{2.IterativeSimplification2}, we see that \eqref{2.eq:CloseTo0OffDiagonal} holds whenever $s_1>s'$. Furthermore, by \eqref{2.IterativeSimplificationI}, whenever $s_1\leq s'$, we can assume without loss of generality that $s_\ell\leq s'$. Thus, in order to prove \cref{2.lem:OffDiagonalValuesPresent}, it only remains to show that \eqref{2.eq:CloseTo1/2OffDiagonal} and \eqref{2.eq:CloseTo0OffDiagonal} hold when $s_\ell\leq s'$. 
    We will do this by checking the following cases: (i) $s_\ell<s'$, (ii) $s_\ell=s'$ and $c\leq d-1$, (iii) $s_\ell=s'$, $c=d$, and $\ell=1$, and (iv) $s_\ell=s'$, $c=d$, and $\ell>1$.\\
    
    Suppose first that $s_\ell<s'$. Note that by condition (C.2), $m_{t,1}<m_{t,2}<\cdots<m_{t,t^2}$. So, by condition (C.4),
    $$\left|\frac{(\sum_{j=1}^\ell \xi_{s_j}m_{t,s_j})^c}{2bdm_{t,s}m_{t,s'}^{d-1}}\right|<\frac{(\ell m_{t,s_\ell})^c}{m_{t,s'}}<\frac{t^{2d}m_{t,s_\ell}^d}{m_{t,s'}}<\frac{1}{t^42^{t+1}}.$$
    Next we assume that $s_\ell= s'$ and $c\leq d-1$. By condition (C.1) we obtain
    $$\left|\frac{(\sum_{j=1}^\ell \xi_{s_j}m_{t,s_j})^c}{2bdm_{t,s}m_{t,s'}^{d-1}}\right|<\frac{(\ell m_{t,s'})^{d-1}}{m_{t,s}m_{t,s'}^{d-1}}<\frac{t^{2d}}{m_{t,s}}<\frac{1}{t^42^{t+1}}.$$
    We now let  $s_\ell=s'$, $c=d$, and $\ell=1$. We have 
    $$\frac{(\sum_{j=1}^\ell \xi_{s_j}m_{t,s_j})^c}{2bdm_{t,s}m_{t,s'}^{d-1}}=\frac{\xi^d_{s'}m_{t,s'}^d}{2bdm_{t,s}m_{t,s'}^{d-1}}\in\Z.$$
    Finally, we take $s_\ell=s'$, $c=d$, and $\ell>1$. Observe that 
    $$(\sum_{j=1}^\ell \xi_{s_j}m_{t,s_j})^d=\sum_{m=0}^d\binom{d}{m}(\sum_{j=1}^{\ell-1} \xi_{s_j}m_{t,s_j})^{d-m}(\xi_{s'}m_{t,s'})^m.$$
    When $m<d-1$,
    \begin{equation}\label{2.eq:c=d,m<d-1}
    \left|\frac{\binom{d}{m}(\sum_{j=1}^{\ell-1}\xi_{s_j}m_{t,s_j})^{d-m}(\xi_{s'}m_{t,s'})^{m}}{2bdm_{t,s}m_{t,s'}^{d-1}}\right|<\binom{d}{m}\frac{t^{2d}m_{t,s_{\ell-1}}^d}{m_{t,s'}}<\binom{d}{m}\frac{1}{t^42^{t+1}2^d}.
    \end{equation}
    When $m=d$, 
    \begin{equation}\label{2.eq:c=d,m=d}
    \left|\frac{\binom{d}{m}(\sum_{j=1}^{\ell-1}\xi_{s_j}m_{t,s_j})^{d-m}(\xi_{s'}m_{t,s'})^{m}}{2bdm_{t,s}m_{t,s'}^{d-1}}\right|=\frac{m_{t,s'}^d}{2bdm_{t,s}m_{t,s'}^{d-1}}\in\Z.
    \end{equation}
    When $m=d-1$, \cref{2.lem:PastInSamePiece} implies that
    $$\frac{\binom{d}{m}(\sum_{j=1}^{\ell-1}\xi_{s_j}m_{t,s_j})^{d-m}(\xi_{s'}m_{t,s'})^{m}}{2bdm_{t,s}m_{t,s'}^{d-1}}\equiv\frac{\sum_{j=1}^{\ell-1}\xi_{s_j}\xi_{s'}^mm_{t,s_j}}{2bm_{t,s}}\equiv\frac{\sum_{s_j\leq s}\xi_{s_j}\xi_{s'}^mm_{t,s_j}}{2bm_{t,s}} \mod 1.$$
    Thus, if $s\not\in\{s_1,...,s_\ell\}$ and $s>1$,
    \begin{equation}\label{2.eq:c=d,m=d-1Part1}
    \left\| \frac{\binom{d}{m}(\sum_{j=1}^{\ell-1}\xi_{s_j}m_{t,s_j})^{d-m}(\xi_{s'}m_{t,s'})^{m}}{2bdm_{t,s}m_{t,s'}^{d-1}} \right\|=\left\| \frac{\sum_{s_j< s}\xi_{s_j}\xi_{s'}^mm_{t,s_j}}{2bm_{t,s}}\right\|<\left| \frac{t^2m_{t,s-1}}{m_{t,s}}\right|<\frac{1}{t^42^{t+1}2^d},
    \end{equation}
    if $s\not\in\{s_1,...,s_\ell\}$ and $s=1$,
    \begin{equation}\label{2.eq:c=d,m=d-1part2}
    \left\| \frac{\binom{d}{m}(\sum_{j=1}^{\ell-1}\xi_{s_j}m_{t,s_j})^{d-m}(\xi_{s'}m_{t,s'})^{m}}{2bdm_{t,s}m_{t,s'}^{d-1}} \right\|=\left\| \frac{\sum_{s_j< s}\xi_{s_j}\xi_{s'}^mm_{t,s_j}}{2bm_{t,s}}\right\|=0,
    \end{equation}
    and if $s\in\{s_1,...,s_\ell\}$,
    \begin{multline}\label{2.eq:c=d,m=d-1Part3}
    \left\| \frac{\binom{d}{m}(\sum_{j=1}^{\ell-1}\xi_{s_j}m_{t,s_j})^{d-m}(\xi_{s'}m_{t,s'})^{m}}{2bdm_{t,s}m_{t,s'}^{d-1}}-\frac{\xi_s\xi_{s'}^m}{2b} \right\|\\
    \leq\left\|\frac{\xi_s\xi_{s'}^mm_{t,s}}{2bm_{t,s}} -\frac{\xi_s\xi_{s'}^m}{2b}\right\|+\left\| \frac{\sum_{s_j< s}\xi_{s_j}\xi_{s'}^mm_{t,s_j}}{2bm_{t,s}}\right\|<\frac{1}{t^42^{t+1}2^d}.
    \end{multline}
    Combining  \eqref{2.eq:c=d,m<d-1}, \eqref{2.eq:c=d,m=d}, \eqref{2.eq:c=d,m=d-1Part1}, \eqref{2.eq:c=d,m=d-1part2}, and \eqref{2.eq:c=d,m=d-1Part3}, we obtain
    $$  \left\|\frac{(\sum_{j=1}^\ell\xi_{s_j} m_{t,s_j})^d}{2bdm_{t,s}m_{t,s'}^{d-1}}-\frac{\xi_{s}\xi_{s'}^{d-1}}{2b}\right\|<\sum_{m=0}^d\binom{d}{m}\frac{1}{t^42^{t+1}2^d}=\frac{1}{t^42^{t+1}}$$
    if $s\in\{s_1,...,s_{\ell-1}\}$ and 
      $$  \left\|\frac{(\sum_{j=1}^\ell\xi_{s_j} m_{t,s_j})^d}{2bdm_{t,s}m_{t,s'}^{d-1}}\right\|<\sum_{m=0}^d\binom{d}{m}\frac{1}{t^42^{t+1}2^d}=\frac{1}{t^42^{t+1}}
      $$
      otherwise. We are done. 
\end{proof}
\begin{lem}\label{2.lem:DiagonalValuesPresent}
    Let $t\in\N$,  $c\in\{1,...,d\}$, $\ell\in\{1,...,t^2\}$, $\xi_{1},...,\xi_{t^2}\in\{-1,1\}$, and  $s,s_1,...,s_\ell\in\{1,...,t^2\}$ such that, when $\ell>1$, $s_1<\cdots<s_\ell$. Then,
    \begin{equation}\label{2.BoundOnDiagonalTermPresentEqual}
    \left\|\frac{(\sum_{j=1}^\ell\xi_{s_j} m_{t,s_j})^c}{2bm_{t,s}^{d}}-\frac{\xi_{s}^{d}}{2b}\right\|<\frac{1}{t^42^{t+1}}
    \end{equation}
    if $c=d$ and there exist $i\in\{1,...,\ell\}$ with $s_i=s$, and 
    \begin{equation}\label{2.BoundOnDiagonalTermPresentDifferent}
    \left\|\frac{(\sum_{j=1}^\ell\xi_{s_j} m_{t,s_j})^c}{2bm_{t,s}^{d}}\right\|<\frac{1}{t^42^{t+1}}
    \end{equation}
    otherwise.
\end{lem}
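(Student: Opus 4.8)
The plan is to follow the template of the proof of \cref{2.lem:OffDiagonalValuesPresent}, exploiting that here the denominator $2bm_{t,s}^{d}$ is homogeneous in $m_{t,s}$, which makes the argument somewhat shorter. The first move is a reduction modulo $1$ that discards all ``future'' indices. Expanding $(\sum_{j=1}^{\ell}\xi_{s_j}m_{t,s_j})^{c}$ by the multinomial theorem, any monomial containing a factor $m_{t,s_j}$ with $s_j>s$ is divisible by $m_{t,s_j}$, and by condition (C.2) (together with the induced divisibility chain $m_{t,s}\mid m_{t,s+1}\mid\cdots$) we have $2bm_{t,s}^{d}\mid 2bdm_{t,s}^{d+1}\mid m_{t,s+1}\mid m_{t,s_j}$. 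Hence every such monomial is an integer multiple of $2bm_{t,s}^{d}$ and drops out modulo $1$. Therefore
$$\frac{(\sum_{j=1}^{\ell}\xi_{s_j}m_{t,s_j})^{c}}{2bm_{t,s}^{d}}\equiv\frac{(\sum_{s_j\leq s}\xi_{s_j}m_{t,s_j})^{c}}{2bm_{t,s}^{d}}\pmod{1},$$
so it suffices to bound the right-hand side, where now every surviving index satisfies $s_j\leq s$.

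If $c<d$ (which always falls under the ``otherwise'' alternative), I would bound the reduced quantity crudely: since each $m_{t,s_j}\leq m_{t,s}$ and $\ell\leq t^{2}$, its absolute value is at most $\frac{(t^{2}m_{t,s})^{c}}{2bm_{t,s}^{d}}=\frac{t^{2c}}{2bm_{t,s}^{d-c}}\leq\frac{t^{2d}}{2bm_{t,s}}$, and condition (C.1) gives $m_{t,s}\geq m_{t,1}>t^{2d+4}2^{t+1}2^{d}$, whence the expression is smaller than $\frac{1}{t^{4}2^{t+1}}$. Since $\|r\|\leq|r|$, this proves \eqref{2.BoundOnDiagonalTermPresentDifferent} in this regime.

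The case $c=d$ splits according to whether $s$ appears among $s_1,\dots,s_\ell$. If $s\notin\{s_1,\dots,s_\ell\}$, all surviving indices are $<s$, so the reduced sum has absolute value at most $t^{2}m_{t,s-1}$ and the quantity is bounded by $\frac{t^{2d}m_{t,s-1}^{d}}{2bm_{t,s}^{d}}\leq\frac{t^{2d}}{2b}\cdot\frac{m_{t,s-1}^{d}}{m_{t,s}}<\frac{1}{t^{4}2^{t+1}}$ by condition (C.4); this is \eqref{2.BoundOnDiagonalTermPresentDifferent}. If instead $s=s_i$ for some $i$, I would write $\sum_{s_j\leq s}\xi_{s_j}m_{t,s_j}=\xi_{s}m_{t,s}+R$ with $R=\sum_{s_j<s}\xi_{s_j}m_{t,s_j}$ and expand by the binomial theorem. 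The $k=0$ term equals $\frac{(\xi_s m_{t,s})^{d}}{2bm_{t,s}^{d}}=\frac{\xi_s^{d}}{2b}$, which is precisely the quantity subtracted in \eqref{2.BoundOnDiagonalTermPresentEqual}. For $1\leq k\leq d$ the $k$-th term has absolute value $\binom{d}{k}\frac{|R|^{k}}{2bm_{t,s}^{k}}\leq\binom{d}{k}\frac{t^{2d}}{2b}\cdot\frac{m_{t,s-1}^{d}}{m_{t,s}}$ (using $|R|\leq t^{2}m_{t,s-1}$ and $\frac{m_{t,s-1}^{k}}{m_{t,s}^{k}}\leq\frac{m_{t,s-1}^{d}}{m_{t,s}}$), which by (C.4) is at most $\binom{d}{k}\frac{1}{t^{4}2^{t+1}2^{d}}$; summing over $k$ and using $\sum_{k=0}^{d}\binom{d}{k}=2^{d}$ yields a total error below $\frac{1}{t^{4}2^{t+1}}$, establishing \eqref{2.BoundOnDiagonalTermPresentEqual}.

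The conceptual heart is the modulo-$1$ reduction in the first paragraph; once the sum is confined to indices $\leq s$, everything reduces to elementary magnitude estimates. The one place demanding care is the constant bookkeeping in the last subcase: the factor $2^{d}$ produced by the binomial sum $\sum_k\binom{d}{k}$ must be exactly absorbed by the $2^{d}$ appearing in the denominators of conditions (C.1) and (C.4), which is precisely why that factor was built into those conditions. A minor point to verify is the degenerate case $s=1$ (and, in the last subcase, $R=0$), where the sums over $s_j<s$ are empty and the claimed bounds hold trivially.
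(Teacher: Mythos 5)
Your proposal is correct and follows essentially the same route as the paper's proof: a divisibility-based reduction modulo $1$ that discards all indices above $s$ (the paper does this iteratively via \cref{2.lem:PastInSamePiece}, you do it in one multinomial step), followed by the same magnitude estimates from (C.1) and (C.4) and the same binomial expansion isolating the $(\xi_s m_{t,s})^d$ term. The constant bookkeeping with the factor $2^d$ and the degenerate cases $s=1$, $R=0$ are handled correctly.
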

\begin{proof}
    The proof is similar to that of \cref{2.lem:OffDiagonalValuesPresent}.  If $s_\ell>s$, \cref{2.lem:PastInSamePiece} implies that 
    \begin{equation*}
    \frac{(\sum_{j=1}^\ell\xi_{s_j} m_{t,s_j})^c}{2bm_{t,s}^{d}}\equiv\frac{\sum_{m=0}^c\binom{c}{m}(\sum_{j=1}^{\ell-1} \xi_{s_j}m_{t,s_j})^{c-m}(\xi_{s_\ell}m_{t,s_\ell})^m}{2bm_{t,s}^d}\equiv \frac{(\sum_{j=1}^{\ell-1} \xi_{s_j}m_{t,s_j})^c}{2bm_{t,s}^{d}}\mod 1,
    \end{equation*}
    when $\ell>1$ and 
    \begin{equation*}
    \frac{(\sum_{j=1}^\ell \xi_{s_j}m_{t,s_j})^c}{2bm_{t,s}^{d}}\equiv \frac{ (\xi_{s_{\ell}}m_{t,s_\ell})^c}{2bm_{t,s}^d}\equiv 0\mod 1
    \end{equation*}
    when $\ell=1$. Thus, in order to prove  \cref{2.lem:DiagonalValuesPresent}, we only need to show that \eqref{2.BoundOnDiagonalTermPresentEqual} and \eqref{2.BoundOnDiagonalTermPresentDifferent} hold when $s_\ell\leq s$. We will do this by considering the following cases: (i) $s_\ell<s$, (ii) $s_\ell=s$ and $\ell=1$, and (iii) $s_\ell=s$ and $\ell>1$.\\

    Suppose first that $s_\ell<s$ and note that for any $c\in\{1,...,d\}$,
    \begin{equation}\label{2.BoundForSmallSumsDiagonal'}
         \left|\frac{(\sum_{j=1}^\ell \xi_{s_j}m_{t,s_j})^c}{2bm_{t,s}}\right|<\frac{t^{2d}m_{t,s_\ell}^d}{m_{t,s}}<\frac{1}{t^42^{t+1}2^d}
    \end{equation}
    So, in particular, \eqref{2.BoundOnDiagonalTermPresentDifferent} holds when $s_\ell<s$.\\
    Next we assume that $s_\ell=s$ and $\ell=1$. When $c<d$, 
    $$\left\|\frac{(\sum_{j=1}^\ell \xi_{s_j}m_{t,s_j})^c}{2bm^d_{t,s}}\right\|=\left\|\frac{\xi_s^cm^c_{t,s}}{2bm^d_{t,s}}\right\|<\frac{1}{m_{t,s}}<\frac{1}{t^42^{t+1}2^d}$$
    and, when $c=d$,
    $$\left\|\frac{(\sum_{j=1}^\ell \xi_{s_j}m_{t,s_j})^d}{2bm^d_{t,s}}-\frac{\xi_s^d}{2b}\right\|=\left\|\frac{ \xi_{s}^dm_{t,s}^d}{2bm^d_{t,s}}-\frac{\xi_s^d}{2b}\right\|=0.$$
    Finally, we assume that $s_\ell=s$ and $\ell>1$. By \eqref{2.BoundForSmallSumsDiagonal'}, we have that for any $m\in\{0,...,d-1\}$,
    $$\left| \frac{(\sum_{j=1}^{\ell-1}\xi_{s_j}m_{t,s_j})^{d-m}(\xi_sm_{t,s})^m}{2bm_{t,s}^d}\right|<\frac{1}{t^42^{t+1}2^d}.$$
    Thus, for any $c\in\{1,...,d-1\}$,
    \begin{equation*}
        \left\| \frac{(\sum_{j=1}^\ell\xi_{s_j}m_{t,s_j})^c}{2bm_{t,s}^d}\right\|\leq 
        \sum_{m=0}^{c}\binom{c}{m}\left|\frac{(\sum_{j=1}^{\ell-1}\xi_{s_j}m_{t,s_j})^{c-m}(\xi_sm_{t,s})^m}{2bm_{t,s}^d}\right|<\sum_{m=0}^c\binom{c}{m}\frac{1}{t^42^{t+1}2^d}<\frac{1}{t^42^{t+1}}
    \end{equation*}
    and, when $c=d$,
    \begin{multline*}
    \left\| \frac{(\sum_{j=1}^\ell\xi_{s_j}m_{t,s_j})^d}{2bm_{t,s}^d}-\frac{\xi_s^d}{2b}\right\|\leq \left| \frac{\xi_s^dm_{t,s}^d}{2bm_{t,s}^d}-\frac{\xi_s^d}{2b}\right|\\
    +\sum_{m=0}^{d-1}\binom{d}{m}\left|\frac{(\sum_{j=1}^{\ell-1}\xi_{s_j}m_{t,s_j})^{d-m}(\xi_sm_{t,s})^m}{2bm_{t,s}^d}\right|<\frac{1}{t^42^{t+1}}.
    \end{multline*}
    We are done. 
\end{proof}
\subsection{Constructing random variables with convenient properties}
Let $p\in\Z[x]$ and $\mu$ be as in the statement of \cref{4.SpectralMeasure}. In this subsection we define the sequences of random variables $(A_N)_{N\in\N}$ and $(B_N)_{N\in\N}$ which, as we will see, will help us to compute (or, at the very least approximate)
$$\hat\mu(n)=\int_\mathbb T e^{2\pi ip(n)x}\text{d}\mu(x)$$
for each $n$ in a given IP$_0$ set $\Gamma$. As a matter of fact,  we will define $\mu$ in such a way that for each $n\in\Gamma$ there exists an $N\in\N$, for which $\hat\mu(n)$ is "well approximated" by 
$$\mathbb E(e^{i\pi(A_N+B_N+A_NB_N)}).$$
The following lemma provides useful information about $\mathbb E(e^{i\pi(A_N+B_N+A_NB_N)})$. For any $N\in\N$, we will denote the set $\{1,...,N\}$ by $[N]$.
\begin{lem}\label{2.Lem:MarkovAvarages}
Let $(X_n)_{n\in\N}$ be a sequence of independent identically distributed random variables with 
\begin{equation}
    \mathbb P(\{X_n=0\})=\mathbbm P(\{X_n=1\})=\mathbbm P(\{X_n=2\})=\frac{1}{3}.
\end{equation}
For each $N\in\N$ form the random variables
$$A_N=\left|\{n\in [N]\,|\,X_n=0\}\right|\text{ and }B_N=\left|\{n\in [N]\,|\,X_n=1\}\right|.$$
Then, for each $N\in\N$, 
\begin{equation}\label{2.eq:Average}
\mathbbm E(e^{i\pi(A_N+B_N+A_NB_N)})=p_{N,1}-p_{N,2}-p_{N,3}-p_{N,4},
\end{equation}
where
\begin{equation}\label{2.eq:ProbVector}
\begin{pmatrix} p_{N,1} \\ p_{N,2} \\ p_{N,3} \\ p_{N,4}\end{pmatrix}=
\begin{pmatrix} 
1/3 & 1/3& 1/3 & 0\\
1/3& 1/3 & 0 & 1/3\\
1/3& 0 &1/3 &1/3\\
0 &1/3 &1/3 &1/3
\end{pmatrix}^{N}
\begin{pmatrix} 1\\ 0 \\ 0 \\ 0\end{pmatrix}
\end{equation}
\end{lem}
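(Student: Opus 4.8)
The plan is to use that $A_N$ and $B_N$ are integer-valued, so that $e^{i\pi(A_N+B_N+A_NB_N)}=(-1)^{A_N+B_N+A_NB_N}$ depends only on the parities of $A_N$ and $B_N$. First I would reduce the exponent modulo $2$: writing $a=A_N\bmod 2$ and $b=B_N\bmod 2$, a check of the four cases (equivalently, the identity $1+a+b+ab=(1+a)(1+b)$) shows that $a+b+ab\equiv 0\pmod 2$ exactly when $a=b=0$, and $a+b+ab\equiv 1\pmod 2$ in the three remaining cases. The cross term $A_NB_N$ is precisely what forces the $(\mathrm{odd},\mathrm{odd})$ case to contribute $-1$ rather than $+1$. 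Hence $e^{i\pi(A_N+B_N+A_NB_N)}$ equals $+1$ when both $A_N$ and $B_N$ are even, and equals $-1$ otherwise.

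Next I would track the parity pair $(A_N\bmod 2,\,B_N\bmod 2)$ as a Markov chain on the four states
$$\text{state }1=(0,0),\qquad \text{state }2=(0,1),\qquad \text{state }3=(1,0),\qquad \text{state }4=(1,1),$$
ordered so that state $1$ is the only one on which the exponential takes the value $+1$. Passing from step $n-1$ to step $n$, the value of $X_n$ (which is independent of $X_1,\dots,X_{n-1}$) determines the update: $X_n=0$ flips the parity of $A$, $X_n=1$ flips the parity of $B$, and $X_n=2$ leaves both parities unchanged, each occurring with probability $\tfrac13$. Since this increment rule depends only on $X_n$ and not on the current state, the parity pair is indeed a Markov chain; a direct enumeration of the four starting states shows that its one-step transition matrix is exactly the matrix $M$ appearing in \eqref{2.eq:ProbVector}. (This $M$ happens to be symmetric, so no care about row- versus column-stochastic conventions is needed.)

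Finally, since $A_0=B_0=0$ are both even, the chain starts deterministically in state $1$, so its initial distribution is $(1,0,0,0)^T$ and its distribution after $N$ steps is $M^N(1,0,0,0)^T$; thus $p_{N,j}$ is precisely the probability that $(A_N\bmod 2,\,B_N\bmod 2)$ is in state $j$. Taking expectations and using the value table from the first step yields
$$\mathbb E\!\left(e^{i\pi(A_N+B_N+A_NB_N)}\right)=(+1)\,p_{N,1}+(-1)\,(p_{N,2}+p_{N,3}+p_{N,4})=p_{N,1}-p_{N,2}-p_{N,3}-p_{N,4},$$
which is \eqref{2.eq:Average}. The only point demanding genuine attention is the parity bookkeeping in the first two steps — in particular retaining the cross term $A_NB_N$ so that state $4$ is correctly assigned the value $-1$ — after which the Markov identification and the final averaging are routine.
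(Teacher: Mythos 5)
Your proof is correct and follows essentially the same route as the paper: the paper also reduces the expectation to $\mathbb{P}(E^N_{ee})-\mathbb{P}(E^N_{eo})-\mathbb{P}(E^N_{oe})-\mathbb{P}(E^N_{oo})$ via the parity of $A_N+B_N+A_NB_N$ and then establishes the matrix recursion for these four probabilities by induction on $N$, which is exactly your Markov-chain update written out by hand. The only cosmetic difference is that you start the chain at $N=0$ in state $(0,0)$ while the paper verifies the base case $N=1$ directly; both give $M^N(1,0,0,0)^T$.
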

\begin{proof}
For each $N\in\N$ let 
\begin{multline*}
    E^N_{ee}=\{A_N\in 2\Z,B_N\in 2\Z\},\,E^N_{eo}=\{A_N\in 2\Z,B_N\not\in 2\Z\},\\
    E^N_{oe}=\{A_N\not\in 2\Z,B_N\in 2\Z\},\text{ and }E^N_{oo}=\{A_N\not\in 2\Z,B_N\not\in 2\Z\}.
\end{multline*}
Observe that 
\begin{multline*}
\mathbbm E(e^{i\pi(A_N+B_N+A_NB_N)})
=\mathbbm E(\Bigg(\mathbbm 1_{E^N_{ee}}+\mathbbm 1_{E^N_{eo}}+\mathbbm 1_{E^N_{oe}}+\mathbbm 1_{E^N_{oo}}\Bigg)e^{i\pi(A_N+B_N+A_NB_N)})\\
=\mathbbm P(E^N_{ee})-\mathbbm P(E^N_{eo})-\mathbbm P(E^N_{oe})-\mathbbm P(E^N_{oo}).
\end{multline*}
Thus, in order to prove \cref{2.Lem:MarkovAvarages} it suffices to show that for each $N\in\N$, 
\begin{equation}\label{2.eq:ProbVector2}
    \mathbb P(E_{ee}^N)=p_{N,1},\,\mathbb P(E_{eo}^N)=p_{N,2},\,
    \mathbb P(E_{oe}^N)=p_{N,3},\text{ and }\mathbb P(E_{oo}^N)=p_{N,4}.
\end{equation}
We will do this by induction on $N\in\N$.\\

When $N=1$, we have $\mathbbm P(E_{ee}^1)=\mathbbm P(\{A_1=0,B_1=0\})=1/3$, $\mathbbm P(E_{eo}^1)=\mathbbm P(\{B_1=1\})=1/3$, $\mathbbm P(E_{oe}^1)=\mathbbm P(\{A_1=1\})=1/3$, and $\mathbbm P(E_{oo}^1)=\mathbbm P(\{A_1=1,B_1=1\})=0$. So, \eqref{2.eq:ProbVector2} holds when $N=1$.\\
Fix now $N\in\N$ and suppose that \eqref{2.eq:ProbVector2} holds for this particular value of $N$. Note that 
\begin{multline}
\mathbbm P(E^{N+1}_{ee})\\
=\mathbb P(E^{N+1}_{ee}\cap \{X_{N+1}=2\})+\mathbb P(E^{N+1}_{ee}\cap \{X_{N+1}=1\})+\mathbb P(E^{N+1}_{ee}\cap \{X_{N+1}=0\})\\
=\mathbb P(E^{N}_{ee}\cap \{X_{N+1}=2\})+\mathbb P(E^{N}_{eo}\cap \{X_{N+1}=1\})+\mathbb P(E^{N}_{oe}\cap \{X_{N+1}=0\})\\
=\frac{1}{3}\Bigg(\mathbbm P(E^N_{ee})+\mathbbm P(E^N_{eo})+\mathbbm P(E^N_{oe})\Bigg).
\end{multline}
In a similar way one can show that
\begin{align*}
    \mathbbm P(E^{N+1}_{eo})&=\frac{1}{3}\Bigg(\mathbbm P(E^N_{ee})+\mathbbm P(E^N_{eo})+\mathbbm P(E^N_{oo})\Bigg)\\
    \mathbbm P(E^{N+1}_{oe})&=\frac{1}{3}\Bigg(\mathbbm P(E^N_{ee})+\mathbbm P(E^N_{oe})+\mathbbm P(E^N_{oo})\Bigg)\\
    \mathbbm P(E^{N+1}_{oo})&=\frac{1}{3}\Bigg(\mathbbm P(E^N_{eo})+\mathbbm P(E^N_{oe})+\mathbbm P(E^N_{oo})\Bigg).
\end{align*}
Thus,
\begin{equation*}
\begin{pmatrix}\mathbbm P(E^{N+1}_{ee}) \\ \mathbbm P(E^{N+1}_{eo})\\ \mathbbm P(E^{N+1}_{oe})\\ \mathbbm P(E^{N+1}_{oo})\end{pmatrix}=
\begin{pmatrix} 
1/3 & 1/3& 1/3 & 0\\
1/3& 1/3 & 0 & 1/3\\
1/3& 0 &1/3 &1/3\\
0 &1/3 &1/3 &1/3
\end{pmatrix}
\begin{pmatrix} \mathbbm P(E^{N}_{ee}) \\ \mathbbm P(E^{N}_{eo}) \\ \mathbbm P(E^{N}_{oe}) \\ \mathbbm P(E^{N}_{oo})\end{pmatrix},
\end{equation*}
completing the induction. We are done. 
\end{proof}
\subsection{Proof of \cref{4.SpectralMeasure}}
The following argument parallels the one used to prove \cite[Theorem 3.1]{zelada2022GaussianETDS}.
\begin{proof}[Proof of \cref{4.SpectralMeasure}]
Let $d=\deg(p)$, so $d>1$, let $b_1,...,b_d\in\Z$ be such that 
$$p(x)=b_dx^d+\cdots+b_1x$$
(so, in particular, $b_d\neq 0$), and let $((m_{t,s})_{s=1}^{t^2})_{t\in\N}$ be a family of finite sequences in $\N$ for which conditions (C.1)-(C.5) in Subsection 4.1 hold with $b=|b_d|$.
To prove \cref{4.SpectralMeasure}, we will first define the probability measure $\mu$ and then prove that \eqref{3.eq:NegativeCorrelations} holds.

\textit{\qedsymbol Defining $\mu$}:  Let $\mathcal C=\prod_{t=1}^\infty\prod_{s=1}^{t^2}\{0,1,2\}$ be endowed with the product topology and let $\mathbb P$ be the product measure on $\mathcal C$ defined by the property  that for any $t\in\N$ and any $s\in\{1,...,t^2\}$,
$$\mathbb P(\{\omega\in\mathcal C\,|\,\omega(t,s)=0\})=\mathbb P(\{\omega\in\mathcal C\,|\,\omega(t,s)=1\})=\mathbb P(\{\omega\in\mathcal C\,|\,\omega(t,s)=2\})=\frac{1}{3}.$$
 Define the map $g:\mathcal C\rightarrow \R$ by 
$$g(\omega)=\sum_{t=2}^\infty\left(\sum_{s=1}^{t^2}\frac{\mathbbm 1_{\{0,1\}}(\omega(t,s))}{2bm_{t,s}^d}+\sum_{s'=2}^{t^2}\sum_{s=1}^{s'-1}\frac{\mathbbm 1_{\{0\}}(\omega(t,s))\mathbbm 1_{\{1\}}(\omega(t,s'))+\mathbbm 1_{\{1\}}(\omega(t,s))\mathbbm 1_{\{0\}}(\omega(t,s'))}{2bdm_{t,s}m_{t,s'}^{d-1}} \right)$$
Observe that for any $\omega\in\mathcal C$ and any $t>1$, 
\begin{multline*}
    \left|\sum_{s=1}^{t^2}\frac{\mathbbm 1_{\{0,1\}}(\omega(t,s))}{2bm_{t,s}^d}+\sum_{s'=2}^{t^2}\sum_{s=1}^{s'-1}\frac{\mathbbm 1_{\{0\}}(\omega(t,s))\mathbbm 1_{\{1\}}(\omega(t,s'))+\mathbbm 1_{\{1\}}(\omega(t,s))\mathbbm 1_{\{0\}}(\omega(t,s'))}{2bdm_{t,s}m_{t,s'}^{d-1}}\right|\\
    \leq \sum_{s=1}^{t^2}\frac{1}{2m_{t,s}^d}+\sum_{s'=2}^{t^2}\sum_{s=1}^{s'-1}\frac{1}{2dm_{t,s}m_{t,s'}^{d-1}}.
\end{multline*}
Thus, by Weierstrass M-test and \cref{2.lem:BoundOftheFuture}, $g$ is continuous. 
Let $\phi$ be the canonical map from $\R$ to $[0,1)= \R/\Z$ (so $\phi(x) = x \mod 1$ and $\phi$ is continuous) and let $f=\phi\circ g$.
Since $\phi\circ g$ is continuous, we have that $f$ is measurable. Let
\begin{equation}\label{4.eq:DefnMu}
\mu=\mathbb P\circ f^{-1}.
\end{equation}
Clearly, $\mu$ is a Borel probability measure defined on $\mathbb T$.\\

\textit{\qedsymbol Preliminary estimates for the proof of \eqref{3.eq:NegativeCorrelations}}: Fix now $\tau\in\N$, $\tau>2$. Let $F\subseteq\{1,...,\tau^2\}$ be such that $|F|\geq \tau$ and let $\xi_{1},...,\xi_{\tau^2}\in\{-1,1\}$. For each $\omega\in \mathcal C$ define the functions 
$$A_{\tau,F}(\omega)=|\{j\in F\,|\,\omega(\tau,j)=0\}|$$
and 
$$B_{\tau,F}(\omega)=|\{j\in F\,|\,\omega(\tau,j)=1\}|.$$
Set $n=\sum_{j\in F} \xi_{j}m_{\tau,j}$. We claim that 
\begin{equation}\label{3.eq:MainApproximation}
\left\|p(n)f(\omega)-\frac{A_{\tau,F}(\omega)+B_{\tau,F}(\omega)+A_{\tau,F}(\omega)B_{\tau,F}(\omega)}{2}\right\|<\frac{\sum_{c=1}^d|b_c|}{2^{\tau-1}}
\end{equation}
for any $\omega\in\mathcal C$. 
Indeed, by \cref{2.lem:PastPiecesAreIntegers}, we have that 
\begin{multline*}
p(n)\sum_{t=2}^{\tau-1}\left(\sum_{s=1}^{t^2}\frac{\mathbbm 1_{\{0,1\}}(\omega(t,s))}{2bm_{t,s}^d}\right.\\
\left.+\sum_{s'=2}^{t^2}\sum_{s=1}^{s'-1}\frac{\mathbbm 1_{\{0\}}(\omega(t,s))\mathbbm 1_{\{1\}}(\omega(t,s'))+\mathbbm 1_{\{1\}}(\omega(t,s))\mathbbm 1_{\{0\}}(\omega(t,s'))}{2bdm_{t,s}m_{t,s'}^{d-1}} \right)\in\Z
\end{multline*}
 by \cref{2.lem:BoundOftheFuture} that,
\begin{multline*}
\left|p(n)\sum_{t=\tau+1}^{\infty}\left(\sum_{s=1}^{t^2}\frac{\mathbbm 1_{\{0,1\}}(\omega(t,s))}{2bm_{t,s}^d}\right.\right.\\
\left.\left.+\sum_{s'=2}^{t^2}\sum_{s=1}^{s'-1}\frac{\mathbbm 1_{\{0\}}(\omega(t,s))\mathbbm 1_{\{1\}}(\omega(t,s'))+\mathbbm 1_{\{1\}}(\omega(t,s))\mathbbm 1_{\{0\}}(\omega(t,s'))}{2bdm_{t,s}m_{t,s'}^{d-1}} \right)\right|\\
\leq |p(n)|\sum_{t=\tau+1}^\infty\left(\sum_{s=1}^{t^2}\frac{1}{2m_{t,s}^d}+\sum_{s'=2}^{t^2}\sum_{s=1}^{s'-1}\frac{1}{2dm_{t,s}m_{t,s'}^{d-1}}\right)<\frac{|p(n)|}{\tau^{2d}m_{\tau,\tau^2}^d2^\tau}\\
\leq \frac{\sum_{c=1}^d|b_c|(\tau^2 m_{\tau,\tau^2})^d}{\tau^{2d}m_{\tau,\tau^2}^d2^\tau}<\frac{\sum_{c=1}^d|b_c|}{2^\tau}.
\end{multline*}
and, by Lemmas \ref{2.lem:OffDiagonalValuesPresent} and \ref{2.lem:DiagonalValuesPresent},
\begin{multline*} 
\left\|(\sum_{c=1}^{d-1}b_cn^c)\left(\sum_{s=1}^{\tau^2}\frac{\mathbbm 1_{\{0,1\}}(\omega(\tau,s))}{2bm_{\tau,s}^d}+\sum_{s'=2}^{\tau^2}\sum_{s=1}^{s'-1}\frac{\mathbbm 1_{\{0\}}(\omega(\tau,s))\mathbbm 1_{\{1\}}(\omega(\tau,s'))+\mathbbm 1_{\{1\}}(\omega(\tau,s))\mathbbm 1_{\{0\}}(\omega(\tau,s'))}{2bdm_{\tau,s}m_{\tau,s'}^{d-1}}\right)\right\|\\
\leq\sum_{c=1}^{d-1}|b_c|\left\|n^c\sum_{s=1}^{\tau^2}\frac{\mathbbm 1_{\{0,1\}}(\omega(\tau,s))}{2bm_{\tau,s}^d}+n^c\sum_{s'=2}^{\tau^2}\sum_{s=1}^{s'-1}\frac{\mathbbm 1_{\{0\}}(\omega(\tau,s))\mathbbm 1_{\{1\}}(\omega(\tau,s'))+\mathbbm 1_{\{1\}}(\omega(\tau,s))\mathbbm 1_{\{0\}}(\omega(\tau,s'))}{2bdm_{\tau,s}m_{\tau,s'}^{d-1}} \right\|\\
<\frac{\tau^4\sum_{c=1}^{d-1}|b_c|}{\tau^42^{\tau+1}}=\frac{\sum_{c=1}^{d-1}|b_c|}{2^{\tau+1}}.
\end{multline*}
Thus,
\begin{multline}\label{3.eq:InequalityAtNthTime1}
    \left\|p(n)f(\omega)-\frac{A_{\tau,F}(\omega)+B_{\tau,F}(\omega)+A_{\tau,F}(\omega)B_{\tau,F}(\omega)}{2}\right\|\\<\left\|b_dn^d\sum_{s'=2}^{\tau^2}\sum_{s=1}^{s'-1}\frac{\mathbbm 1_{\{0\}}(\omega(\tau,s))\mathbbm 1_{\{1\}}(\omega(\tau,s'))+\mathbbm 1_{\{1\}}(\omega(\tau,s))\mathbbm 1_{\{0\}}(\omega(\tau,s'))}{2bdm_{\tau,s}m_    {\tau,s'}^{d-1}}-\frac{A_{\tau,F}(\omega)B_{\tau,F}(\omega)}{2}
    \right\|\\
    +\left\|b_dn^d\sum_{s=1}^{\tau^2}\frac{\mathbbm 1_{\{0,1\}}(\omega(\tau,s))}{2bm_{\tau,s}^d}-\frac{A_{\tau,F}(\omega)+B_{\tau,F}(\omega)}{2}\right\|+\frac{\sum_{c=1}^{d-1}|b_c|}{2^{\tau+1}}+\frac{\sum_{c=1}^{d}|b_c|}{2^{\tau}}.
\end{multline}
Since $-\frac{1}{2}\equiv \frac{1}{2} \mod 1$ (and, hence, $-\frac{k}{2}\equiv\frac{k}{2}\mod 1$ for each $k\in\Z$) and $((m_{t,s})_{s=1}^{t^2})_{t\in\N}$ satisfies conditions (C.1)-(C.5) when $b=1$, \cref{2.lem:OffDiagonalValuesPresent} implies that 
\begin{multline}\label{3.eq:InequalityAtNthTime2}
    \left\|b_dn^d\sum_{s'=2}^{\tau^2}\sum_{s=1}^{s'-1}\frac{\mathbbm 1_{\{0\}}(\omega(\tau,s))\mathbbm 1_{\{1\}}(\omega(\tau,s'))+\mathbbm 1_{\{1\}}(\omega(\tau,s))\mathbbm 1_{\{0\}}(\omega(\tau,s'))}{2bdm_{\tau,s}m_    {\tau,s'}^{d-1}}-\frac{A_{\tau,F}(\omega)B_{\tau,F}(\omega)}{2}
    \right\|\\
=\left\|\frac{b_d}{|b_d|}\left(n^d\sum_{s'=2}^{\tau^2}\sum_{s=1}^{s'-1}\frac{\mathbbm 1_{\{0\}}(\omega(\tau,s))\mathbbm 1_{\{1\}}(\omega(\tau,s'))+\mathbbm 1_{\{1\}}(\omega(\tau,s))\mathbbm 1_{\{0\}}(\omega(\tau,s'))}{2dm_{\tau,s}m_    {\tau,s'}^{d-1}}-\frac{A_{\tau,F}(\omega)B_{\tau,F}(\omega)}{2}
    \right)\right\|\\
=\left\|n^d\sum_{s'=2}^{\tau^2}\sum_{s=1}^{s'-1}\frac{\mathbbm 1_{\{0\}}(\omega(\tau,s))\mathbbm 1_{\{1\}}(\omega(\tau,s'))+\mathbbm 1_{\{1\}}(\omega(\tau,s))\mathbbm 1_{\{0\}}(\omega(\tau,s'))}{2dm_{\tau,s}m_    {\tau,s'}^{d-1}}-\frac{A_{\tau,F}(\omega)B_{\tau,F}(\omega)}{2}\right\|\\
=\left\|n^d\sum_{s\in\{j\in [\tau^2]\,|\,\omega(\tau,j)=0\}}\sum_{s'\in\{j\in [\tau^2]\,|\,\omega(\tau,j)=1\}}\frac{1}{2dm_{\tau,\min\{s,s'\}}m_    {\tau,\max\{s,s'\}}^{d-1}}-\frac{A_{\tau,F}(\omega)B_{\tau,F}(\omega)}{2}
    \right\|\\
\leq  \left\|n^d\sum_{s\in\{j\in [\tau^2]\,|\,\omega(\tau,j)=0\}}\sum_{s'\in\{j\in [\tau^2]\,|\,\omega(\tau,j)=1\}}\frac{1}{2dm_{\tau,\min\{s,s'\}}m_    {\tau,\max\{s,s'\}}^{d-1}}\right.\\
\left.-\sum_{s\in\{j\in F\,|\,\omega(\tau,j)=0\}}\sum_{s'\in\{j\in F\,|\,\omega(\tau,j)=1\}}\frac{1}{2}\right\|
+\left\|\sum_{s\in\{j\in F\,|\,\omega(\tau,j)=0\}}\sum_{s'\in\{j\in F\,|\,\omega(\tau,j)=1\}}\frac{1}{2}-\frac{A_{\tau,F}(\omega)B_{\tau,F}(\omega)}{2}\right\|\\
    <\frac{\tau^4-\tau^2}{\tau^42^{\tau+1}}+\left\|\sum_{s\in\{j\in F\,|\,\omega(\tau,j)=0\}}\sum_{s'\in\{j\in F\,|\,\omega(\tau,j)=1\}}\frac{1}{2}-\frac{A_{\tau,F}(\omega)B_{\tau,F}(\omega)}{2}\right\|=\frac{\tau^4-\tau^2}{\tau^42^{\tau+1}}
\end{multline}
and \cref{2.lem:DiagonalValuesPresent} implies 
\begin{multline}\label{3.eq:InequalityAtNthTime3}
    \left\|b_dn^d\sum_{s=1}^{\tau^2}\frac{\mathbbm 1_{\{0,1\}}(\omega(\tau,s))}{2bm_{\tau,s}^d}-\frac{A_{\tau,F}(\omega)+B_{\tau,F}(\omega)}{2}\right\|\\
    \leq \left\|\sum_{s\in \{j\in [\tau^2]\,|\,\omega(\tau,j)\in\{0,1\}\}}\frac{n^d}{2m_{\tau,s}^d}
    -\sum_{s\in\{j\in F\,|\,\omega(\tau,j)\in\{0,1\}\}}\frac{1}{2}\right\|\\
    +\left\|\sum_{s\in\{j\in F\,|\,\omega(\tau,j)\in\{0,1\}\}}\frac{1}{2}-\frac{A_{\tau,F}(\omega)+B_{\tau,F}(\omega)}{2}\right\|\\
    <\frac{\tau^2}{\tau^42^{\tau+1}}+\left\|\sum_{s\in\{j\in F\,|\,\omega(\tau,j)\in\{0,1\}\}}\frac{1}{2}-\frac{A_{\tau,F}(\omega)+B_{\tau,F}(\omega)}{2}\right\|=\frac{\tau^2}{\tau^42^{\tau+1}}.
\end{multline}
Combining \eqref{3.eq:InequalityAtNthTime1}, \eqref{3.eq:InequalityAtNthTime2}, and \eqref{3.eq:InequalityAtNthTime3}, we see that \eqref{3.eq:MainApproximation} holds.\\

\textit{\qedsymbol Proof of \eqref{3.eq:NegativeCorrelations}}: It now follows from \eqref{3.eq:MainApproximation} that
\begingroup 
\allowdisplaybreaks
\begin{multline*}
\limsup_{t\rightarrow\infty}\left(\max_{F\subseteq \{1,...,t^2\},\,|F|\geq t}\max_{\xi_1,...,\xi_{t^2}\in\{-1,1\}}\right.\\
\left.\left|\int_\mathbb T e^{2\pi ip(\sum_{j\in F}\xi_jm_{t,j})x}\text{d}\mu(x)-\int_{\mathcal C}e^{\pi i(A_{t,F}(\omega)+B_{t,F}(\omega)+A_{t,F}(\omega)B_{t,F}(\omega))}\text{d}\mathbb P(\omega)\right|\right)\\
=\limsup_{t\rightarrow\infty}\left(\max_{F\subseteq \{1,...,t^2\},\,|F|\geq t}\max_{\xi_1,...,\xi_{t^2}\in\{-1,1\}}\right.\\
\left.\left|\int_\mathcal C e^{2\pi ip(\sum_{j\in F}\xi_jm_{t,j})f(\omega)}\text{d}\mathbb P(\omega)-\int_{\mathcal C}e^{\pi i(A_{t,F}(\omega)+B_{t,F}(\omega)+A_{t,F}(\omega)B_{t,F}(\omega))}\text{d}\mathbb P(\omega)\right|\right)=0.
\end{multline*}
\endgroup
Thus, in order to prove \eqref{3.eq:NegativeCorrelations}, all we need to show is that 
\begin{equation}\label{3.eq:Main'}
\lim_{t\rightarrow\infty}\left(\max_{F\subseteq \{1,...,t^2\},\,|F|\geq t}\max_{\xi_1,...,\xi_{t^2}\in\{-1,1\}}\left|\int_{\mathcal C}e^{\pi i(A_{t,F}(\omega)+B_{t,F}(\omega)+A_{t,F}(\omega)B_{t,F}(\omega))}\text{d}\mathbb P(\omega)+\frac{1}{2}\right|\right)=0.
\end{equation}
To see this, let  $t>2$ and let $F\subseteq \{1,...,t^2\}$ be such that $|F|\geq t$. Observe that $A_{t,F}$ and $B_{t,F}$ only depend on the independent and identically distributed random variables $(X_{t,j})_{j\in F}$, where for each $\omega\in \mathcal C$, $X_{t,j}(\omega)=\omega(t,j)$. Thus, by \cref{2.Lem:MarkovAvarages},
\begin{multline*}
\int_{\mathcal C}e^{\pi i(A_{t,F}(\omega)+B_{t,F}(\omega)+A_{t,F}(\omega)B_{t,F}(\omega))}\text{d}\mathbb P(\omega)=\mathbb E(e^{i\pi(A_{t,F}+B_{t,F}+A_{t,F}B_{t,F})})\\
=\mathbb E(e^{i\pi(A_{|F|}+B_{|F|}+A_{|F|}B_{|F|})})=p_{|F|,1}-p_{|F|,2}-p_{|F|,3}-p_{|F|,4},
\end{multline*}
where $p_{|F|,j}$, $j=1,...,4$ are as defined in \eqref{2.eq:ProbVector}. 
Noting  that
\begin{equation*}
\lim_{N\rightarrow\infty}
\begin{pmatrix} 
1/3 & 1/3& 1/3 & 0\\
1/3& 1/3 & 0 & 1/3\\
1/3& 0 &1/3 &1/3\\
0 &1/3 &1/3 &1/3
\end{pmatrix}^{N}
\begin{pmatrix} 1 \\ 0 \\ 0 \\ 0\end{pmatrix}=\begin{pmatrix} 1/4 \\ 1/4 \\ 1/4 \\ 1/4\end{pmatrix},
\end{equation*}
we see that \eqref{3.eq:Main'} holds. We are done. 
\end{proof}
\section{Proof of Proposition B}
In this section we prove the following result from which Proposition B follows.
\begin{namedthm*}{Proposition B$^\prime$}
The Gaussian system $(\R^\Z,\mathcal A,\gamma, T)$ in Theorem A$^\prime$ (and, hence, the
invertible probability preserving system $(X,\mathcal A,\mu, T)$ in the statement of Theorem A) can be picked to be both   weakly mixing  and rigid. 
\end{namedthm*}
\begin{proof}[Proof of Proposition B$^\prime$]
    Let $(\R^\Z,\mathcal A,\gamma, T)$ be the Gaussian system utilized in the proof of Theorem A$^\prime$ (i.e. $(\R^\Z,\mathcal A,\gamma,T)$ is the Gaussian system associated with $\rho$) and let $((m_{t,s})_{s=1}^{t^2})_{t\in\N}$ be the sequence guaranteed to exist in \cref{4.SpectralMeasure}. For each $t\in\N$, we will let $n_t=\sum_{j=1}^{t^2}m_{t,j}$.\\
    We will first show that $(\R^\Z,\mathcal A,\gamma,T)$ is rigid.
    By \eqref{3.eq:MainApproximation}     $$\lim_{t\rightarrow\infty}\int_\mathcal Ce^{4\pi ip(n_t)f(\omega)}\text{d}\mathbb P(\omega)=1$$
   and, hence, by the definition of $\rho$ (and $\mu$), 
$$\lim_{t\rightarrow\infty}\int_\mathbb Te^{4\pi ip(n_t)x}\text{d}\rho(x)=1.$$
It follows that    $$\lim_{t\rightarrow\infty}e^{4\pi ip(n_t)x}=1$$
in $\rho$-measure and, since $|e^{4\pi ip(n_t)x}|\leq 1$ for each $x\in \mathbb T$,
$$\lim_{t\rightarrow\infty}\int_\mathbb T|e^{4\pi ip(n_t)x}-1|\text{d}\rho(x)=0.$$
 So, for any $m\in\Z$,
 $$\lim_{t\rightarrow\infty}\int_\mathbb T e^{2\pi imx} e^{2\pi i(2p(n_t))x}\text{d}\rho(x)=\int_\mathbb T e^{2\pi i mx}\text{d}\rho(x)$$
    which, by (G.5) above, implies that $T$ is rigid.\\

    By (G.4), in order to prove that $(\R^\Z,\mathcal A,\gamma, T)$ is weakly mixing, it suffices to show that there exists a sequence $(m_k)_{k\in\N}$ such that for any $m\in\Z$,
    $$\lim_{k\rightarrow\infty}\int_\mathbb Te^{2\pi im_kx}e^{2\pi imx}\text{d}\rho(x)=0.$$
    In turn, by the definition of $\rho$, it is enough to prove that  there exists a sequence $(m_k)_{k\in\N}$ such that for any $m\in\Z$,
    $$\lim_{k\rightarrow\infty}\int_\mathbb Te^{2\pi im_kx}e^{2\pi imx}\text{d}\mu(x)=0.$$
    For this, note that by \eqref{4.eq:DefnMu}, \eqref{3.eq:MainApproximation}, and \eqref{3.eq:Main'}, for any $m\in\Z$,
   \begin{multline*} \lim_{t\rightarrow\infty}\int_\mathbb T e^{2\pi i p(n_{t})x}e^{2\pi imx}\text{d}\mu(x)
=\lim_{t\rightarrow\infty}\int_\mathcal C e^{\pi i(A_{t,[t^2]}(\omega)+B_{t,[t^2]}(\omega)+A_{t,[t^2]}(\omega)B_{t,[t^2]}(\omega))}e^{2\pi imf(\omega)}\text{d}\mathbb P(\omega)\\
=\lim_{t\rightarrow\infty}\int_\mathcal C e^{\pi i(A_{t,[t^2]}(\omega)+B_{t,[t^2]}(\omega)+A_{t,[t^2]}(\omega)B_{t,[t^2]}(\omega))}\text{d}\mathbb P(\omega)\int_\mathcal C e^{2\pi imf(\omega)}\text{d}\mathbb P(\omega)=-\frac{1}{2}\int_\mathbb Te^{2\pi imx}\text{d}\mu(x)
\end{multline*}
and, hence, for  any $N\in\N$, 
$$\lim_{t_1\rightarrow\infty}\cdots\lim_{t_N\rightarrow\infty}\int_\mathbb T e^{2\pi i\sum_{j=1}^N p(n_{t_j})x}e^{2\pi imx}\text{d}\mu(x)=\left(-\frac{1}{2}\right)^N\int_\mathbb Te^{2\pi imx}\text{d}\mu(x).$$
Thus, for each $k\in\N$ we can find natural numbers $t^{(k)}_1<\cdots<t^{(k)}_k$ for which the sequence $m_k=\sum_{j=1}^kp(n_{t^{(k)}_j})$, $k\in\N$, satisfies
$$\lim_{k\rightarrow\infty}\int_{\mathbb T} e^{2\pi im_kx}e^{2\pi imx}\text{d}\mu(x)=\lim_{k\rightarrow\infty}(-\frac{1}{2})^k\int_\mathbb Te^{2\pi imx}\text{d}\mu(x)=0$$
for any $m\in\Z$. We are done. 
\end{proof}
 \bibliography{Bib.bib}
\bibliographystyle{amsplain}
\end{document}